\newtheorem{theorem}{Theorem}
\newtheorem{corollary}[theorem]{Corollary}
\newtheorem{definition}[theorem]{Definition}
\def \ipp {\mu}
\begin{document}
%
% paper title
% can use linebreaks \\ within to get better formatting as desired
\title{Performance Analysis of Spectral Clustering on Compressed, Incomplete and Inaccurate Measurements}
%
%
% author names and IEEE memberships
% note positions of commas and nonbreaking spaces ( ~ ) LaTeX will not break
% a structure at a ~ so this keeps an author's name from being broken across
% two lines.
% use \thanks{} to gain access to the first footnote area
% a separate \thanks must be used for each paragraph as LaTeX2e's \thanks
% was not built to handle multiple paragraphs
%
%
%\IEEEcompsocitemizethanks is a special \thanks that produces the bulleted
% lists the Computer Society journals use for "first footnote" author
% affiliations. Use \IEEEcompsocthanksitem which works much like \item
% for each affiliation group. When not in compsoc mode,
% \IEEEcompsocitemizethanks becomes like \thanks and
% \IEEEcompsocthanksitem becomes a line break with idention. This
% facilitates dual compilation, although admittedly the differences in the
% desired content of \author between the different types of papers makes a
% one-size-fits-all approach a daunting prospect. For instance, compsoc 
% journal papers have the author affiliations above the "Manuscript
% received ..."  text while in non-compsoc journals this is reversed. Sigh.

\author{Blake~Hunter~%,~\IEEEmembership{Fellow,~OSA,}
        and~Thomas~Strohmer%,~\IEEEmembership{Member,~IEEE,}
        %and~Jane~Doe,~\IEEEmembership{Life~Fellow,~IEEE}% <-this % stops a space
\IEEEcompsocitemizethanks{\IEEEcompsocthanksitem B. Hunter and T. Strohmer are with the Department of Mathematics, University of California, Davis,
CA, 95616.\protect\\
% note need leading \protect in front of \\ to get a newline within \thanks as
% \\ is fragile and will error, could use \hfil\break instead.
E-mail: blakehunter,strohmer@math.ucdavis.edu}

\thanks{Partially supported by NSF DMS grant   1042939, 0811169 and 0636297.}}

\IEEEcompsoctitleabstractindextext{%
\begin{abstract}
%\boldmath
Spectral clustering is one of the most widely used techniques for extracting the underlying global structure of a data set.  Compressed sensing and matrix completion have emerged as prevailing methods for efficiently recovering sparse and partially observed signals respectively.  We combine the distance preserving measurements of compressed sensing and matrix completion with the power of robust spectral clustering.  Our analysis provides rigorous bounds on how small errors in the affinity matrix can affect the spectral coordinates and clusterability.  This work generalizes the current perturbation results of two-class spectral clustering to incorporate multi-class clustering with $k$ eigenvectors.  We thoroughly track how small perturbation from using compressed sensing and matrix completion affect the affinity matrix and in succession the spectral coordinates.  These perturbation results for multi-class clustering require an eigengap between the $k^{th}$ and $(k+1)^{th}$ eigenvalues of the affinity matrix, which naturally occurs in data with $k$ well-defined clusters.  Our theoretical guarantees are complemented with numerical results along with a number of examples of the unsupervised organization and clustering of image data.
\end{abstract}
% IEEEtran.cls defaults to using nonbold math in the Abstract.
% This preserves the distinction between vectors and scalars. However,
% if the journal you are submitting to favors bold math in the abstract,
% then you can use LaTeX's standard command \boldmath at the very start
% of the abstract to achieve this. Many IEEE journals frown on math
% in the abstract anyway. In particular, the Computer Society does
% not want either math or citations to appear in the abstract.

% Note that keywords are not normally used for peer review papers.
%\begin{keywords}
%Computer Society, IEEEtran, journal, \LaTeX, paper, template.
%\end{keywords}
}

% make the title area
\maketitle

% To allow for easy dual compilation without having to reenter the
% abstract/keywords data, the \IEEEcompsoctitleabstractindextext text will
% not be used in maketitle, but will appear (i.e., to be "transported")
% here as \IEEEdisplaynotcompsoctitleabstractindextext when compsoc mode
% is not selected <OR> if conference mode is selected - because compsoc
% conference papers position the abstract like regular (non-compsoc)
% papers do!

\IEEEdisplaynotcompsoctitleabstractindextext
% \IEEEdisplaynotcompsoctitleabstractindextext has no effect when using
% compsoc under a non-conference mode.

% For peer review papers, you can put extra information on the cover
% page as needed:
% \ifCLASSOPTIONpeerreview
% \begin{center} \bfseries EDICS Category: 3-BBND \end{center}
% \fi
%
% For peerreview papers, this IEEEtran command inserts a page break and
% creates the second title. It will be ignored for other modes.
\IEEEpeerreviewmaketitle

\section{Introduction}
\IEEEPARstart{D}{ata} mining has become one of the fastest growing research topics in mathematics and computer science. Spectral clustering is a tool for extracting meaningful information from data by grouping similar objects together \cite{ESL01}.   The method uses the eigenvector of an adjacency matrix for embedding the data into a space that captures the underlying group structure \cite{NCIS}.   High-dimensional signals, magnetic resonance images, and hyperspectral images can be costly to acquire; even simple direct comparisons could be infeasible among such data sets.  Our work shows that the meaningful organization extracted from spectral clustering is preserved under the perturbation from making compressed, incomplete and inaccurate measurements.   Using bounds on the perturbation of eigenvectors, we establish error bounds of the spectral embedding when matrix completion and compressed sensing measurements are used.  Given some error $N \epsilon$ in the entries of an affinity matrix $A \in \mathbb{R}^{N \times N}$, we show that the space spanned by the first $k$ eigenvector are all within $O(N \epsilon)$ of the span of the unperturbed eigenvectors.  We prove that the perturbed spectral coordinates are within $O(N \epsilon)$ of a unitary transform of the unperturbed coordinates and can give k-means cluster assignments within $O(N \epsilon)$ of the unperturbed case.   This analysis holds true when the error perturbation in the entries of an affinity matrix $| A(i,j) - \tilde{A}(i,j) | \leq \epsilon$ is caused from making compressed sensing measurements, matrix completion or any other process, making our perturbed clustering results widely applicable.  This work shows that spectral clustering is achievable in the compressed domain and with missing and noisy entries as long as the spectral gap is satisfied.  
 
As the dimensionality of data increases, data mining tasks such as clustering and classification can become intractable or costly to obtain.  Traditional clustering algorithms must perform dimensionality reduction to make the problem tractable before they can be applied.  Learning in the compressed domain was first proved possible using support vector machines in \cite{CLSDR}.  In addition to techniques for exact recovery of sparse signals, compressed sensing provides a bound on the error derived from making random measurements \cite{DDCS,SSRIIM}.   We show how errors from using compressed sensing can affect the affinity matrix and in turn the spectral coordinates. 

In practice, data may be missing, lost or not fully observed.   There are numerous tasks where you are given only a small portion of the data in hope to understand the entire set.  A set of high dimensional images containing $k$ hidden subsets with missing entries is impossible to cluster with standard clustering methods alone due to the lack of information from the incomplete data.  Suppose the set of images are stacked as rows of a matrix, if this matrix is low rank then the set of similar images contain a wealth information about the missing entries of any one of the images.   Matrix completion uses the information from similar rows of the data matrix to fill in the missing entries making clustering possible.  Matrix completion is an emerging area of research that provides efficient algorithms to reconstruct the full matrix $X$ from a small subset of observed entries via nuclear-norm minimization \cite{emcco09, pcrnomc09}.  Since data matrices are usually not exactly low rank, the matrix completion procedure results in errors in the recovered entries. We analyze how these errors propagate through the spectral clustering steps and derive rigorous bounds under which clusterability of the data is preserved after matrix completion. An exampled is depicted in Fig.~\ref{fig:misclass3facesMS4}, where face images can be successfully clustered even if only 5\% of the image data are available. For details we refer to Section~\ref{sec:res}. 

\begin{figure}[!h]
	\begin{center}
		\includegraphics[width=\textwidth]{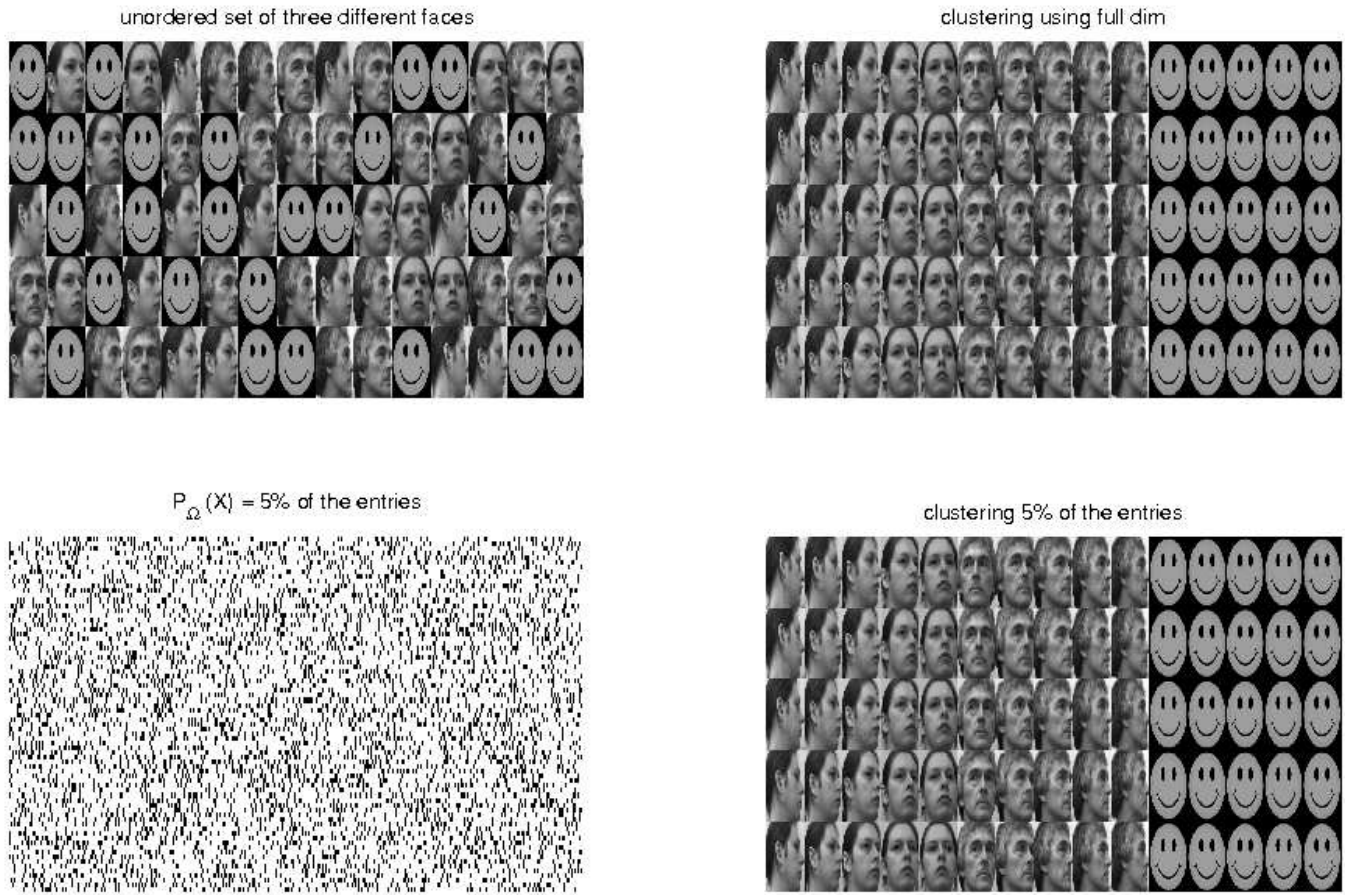}
	\end{center}
	\caption {\footnotesize Clustering a dataset of
100 images of three different people's faces in a range of poses from profile to frontal views where only 5\% of each image is observed.  Only two of the three people's face can be published so a happy face is used here in place of the third person's face for display purposes only.  The images are clustered by applying the matrix completion perturbed spectral clustering coordinates
($2^{nd}$ and $3^{rd}$ eigenvectors). } \label{fig:misclass3facesMS4}
\end{figure}

The structure of the paper is as follows.  Previous results on perturbation of eigenvectors and spectral clustering with perturbed data are presented in Section~\ref{sec:back}.   Our robustness analysis is presented in Section~\ref{sec:method}.  It is followed by a theoretical justification and a comparison to other methods.  Small perturbation error in the affinity matrix is shown to be well behaved in the spectral coordinates in Section~\ref{sec:perteig}.   Measurement errors due to using matrix completion and compressed sensing measurements are shown to give small error in the affinity matrix and in turn the spectral coordinates in Section~\ref{sec:measure} where bounds on the span of the first $k$ eigenvectors under small perturbations are provided. Section~\ref{sec:res} is dedicated to numerical results, where the method is applied tp both synthetic and real world image data sets.

\section{Background}\label{sec:back}
\subsection{Spectral Clustering}
	Clustering is an unsupervised learning problem that reveals the underlying structure from unlabeled data.  The goal of clustering is to partition objects into groups such that objects within the same group are similar. Standard clustering such as k-means requires the space in which the objects are represented, to be linearly separable.  Spectral clustering methods detect non-convex patterns and linearly non-separable clusters.  This allows for a wider range of underlying geometries, making them more flexible \cite{ESL01, KKSC}.

Standard spectral clustering uses the eigenvectors of the graph formed by local distances between data points, to reveal the global structure of the data set.  
A traditional choice of edge weights  uses the Gaussian kernel,
\begin{equation} 
W(x_i,x_j)=\exp \left(  - \frac{\|x_i- x_j\|_2^2}{2 \sigma}\right).\label{eq:defW}
\end{equation}

A random walk on the graph is defined by normalizing the rows of $W$ to give the stochastic matrix, 
\[P=D^{-1}W,\]
where \[D_{i,i} = \sum_{k} W(x_i,x_k)\] is a diagonal matrix of row sums of $W$. 

The original symmetry of $W$, lost in $P=D^{-1}W$, can be preserved by defining $A$ as, 
\begin{equation}
	A=D^{-\frac{1}{2}} W D^{-\frac{1}{2}}. \label{eq:defA}
\end{equation}
Spectral clustering finds the top $k$ eigenvectors $V_k\in\mathbb{R}^{N\times k}$ of $A$ to provide coordinates for clustering.   To cluster the original data a standard clustering algorithm like k-means is then applied to the rows of $V_k$, as illustrated in \cite{ESL01}.    

To show why the eigenvectors of spectral clustering works, Shi and Malik proved in \cite{NCIS} that the second eigenvector of $P$ is the real valued solution to minimizing the normalized cut problem, that bipartitions the points of the graph.  The graph bipartitioning problem was extended to multi-class clustering by using multiple eigenvectors as described in \cite{MCSC,SR4KM}.

\subsection{Perturbation of the second eigenvector}
Earlier results have shown that spectral clustering using the second eigenvector is robust to small perturbation of the data, see \cite{FASC,scwpd}.  These results are based on the following perturbation theorem by Stewart \cite{IMC}.   

\begin{theorem}
  Let $\tilde{A}=A+E$ be a perturbation of $A$ and let $\lambda_i$ and $v_i$ be the $i^{th}$ eigenvalue and eigenvector of $A$ and $\tilde{v_i}$ be the $i^{th}$ eigenvector of $\tilde{A}$ respectively, then
 \begin{equation} \|\tilde{v_2} - v_2  \| \leq \frac{1}{\lambda_2-\lambda_3}\| E \| +O\left( \| E \|^2 \right). \end{equation}
  \label{thm:Stewart}
  \end{theorem}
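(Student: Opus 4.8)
The plan is to treat this as a first-order eigenvalue perturbation problem, exploiting the fact that the affinity matrix $A=D^{-\frac{1}{2}}WD^{-\frac{1}{2}}$ is symmetric and therefore admits an orthonormal eigenbasis $v_1,v_2,\ldots,v_N$ with real eigenvalues, which I order as $\lambda_1\geq\lambda_2\geq\lambda_3\geq\cdots$. Since $\|E\|$ is assumed small, I will expand the perturbed eigenpair as $\tilde v_2=v_2+\delta v_2+O(\|E\|^2)$ and $\tilde\lambda_2=\lambda_2+\delta\lambda_2+O(\|E\|^2)$, fixing the phase of $\tilde v_2$ so that $v_2^\top\tilde v_2\geq 0$ (otherwise $\|\tilde v_2-v_2\|$ could be near $2$ under a sign flip) and imposing the first-order normalization $v_2^\top\delta v_2=0$.

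First I would substitute the expansions into the eigenvalue equation $(A+E)\tilde v_2=\tilde\lambda_2\tilde v_2$ and collect the terms linear in $E$, which yields $A\,\delta v_2+Ev_2=\lambda_2\,\delta v_2+\delta\lambda_2\,v_2$. Projecting this identity onto each basis vector $v_j$ with $j\neq 2$ and using $v_j^\top A=\lambda_j v_j^\top$ isolates the expansion coefficients
\begin{equation}
v_j^\top\delta v_2=\frac{v_j^\top E v_2}{\lambda_2-\lambda_j},\qquad j\neq 2,
\end{equation}
so that $\delta v_2=\sum_{j\neq 2}\frac{v_j^\top E v_2}{\lambda_2-\lambda_j}\,v_j$; projecting onto $v_2$ instead recovers $\delta\lambda_2=v_2^\top E v_2$, which I will not otherwise need.

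The final step is to bound $\|\delta v_2\|$. Because $\{v_j\}$ is orthonormal, Parseval's identity gives $\sum_{j\neq 2}(v_j^\top E v_2)^2\leq\sum_j(v_j^\top E v_2)^2=\|Ev_2\|^2\leq\|E\|^2$, using that $v_2$ is a unit vector. Provided the gap below $\lambda_2$ is the binding one, i.e.\ $\lambda_1-\lambda_2\geq\lambda_2-\lambda_3$ (the generic situation in two-class spectral clustering, where $\lambda_1$ is well separated and the bipartition signal lives in $v_2$), every denominator satisfies $|\lambda_2-\lambda_j|\geq\lambda_2-\lambda_3$, and therefore
\begin{equation}
\|\delta v_2\|^2=\sum_{j\neq 2}\frac{(v_j^\top E v_2)^2}{(\lambda_2-\lambda_j)^2}\leq\frac{1}{(\lambda_2-\lambda_3)^2}\sum_{j\neq 2}(v_j^\top E v_2)^2\leq\frac{\|E\|^2}{(\lambda_2-\lambda_3)^2}.
\end{equation}
Taking square roots and folding the neglected terms into $O(\|E\|^2)$ gives the claimed estimate $\|\tilde v_2-v_2\|\leq\frac{1}{\lambda_2-\lambda_3}\|E\|+O(\|E\|^2)$.

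The main obstacle I anticipate is not the algebra above but making the expansion rigorous: I must justify that the remainder is genuinely $O(\|E\|^2)$ and that $\tilde v_2$ depends smoothly on $E$ near $E=0$. The clean way to do this is to bypass the formal perturbation series and instead use the Riesz spectral projector $P_2=\frac{1}{2\pi i}\oint_\Gamma(zI-A)^{-1}\,dz$ around a contour $\Gamma$ enclosing only $\lambda_2$; expanding the resolvent identity $(zI-\tilde A)^{-1}-(zI-A)^{-1}=(zI-A)^{-1}E(zI-\tilde A)^{-1}$ as a Neumann series shows $\|\tilde P_2-P_2\|\leq C\|E\|+O(\|E\|^2)$ with $C=1/(\lambda_2-\lambda_3)$, and the eigenvector bound follows after fixing the phase. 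A secondary subtlety is the assumption that $\lambda_2$ is simple, so that $v_2$ is well defined; if $\lambda_2=\lambda_3$ the gap vanishes and the statement is vacuous, which is precisely why the theorem is invoked only under the eigengap hypothesis emphasized throughout the paper.
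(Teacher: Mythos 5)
The paper never proves this statement: it is imported verbatim from Stewart \cite{IMC} as a known result, so there is no in-paper argument to compare yours against, and I can only assess your derivation on its own terms. Your first-order expansion is the standard Rayleigh--Schr\"odinger argument; the coefficient formula $v_j^\top\delta v_2=(v_j^\top E v_2)/(\lambda_2-\lambda_j)$ is correct for a simple $\lambda_2$, the Parseval step is fine, and your plan to make the remainder rigorous via the Riesz projector and a Neumann expansion of the resolvent is the right one.

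The genuine gap is the step where you replace every denominator $|\lambda_2-\lambda_j|$, $j\neq 2$, by $\lambda_2-\lambda_3$. That requires in particular $\lambda_1-\lambda_2\geq\lambda_2-\lambda_3$, which you introduce as a side hypothesis and describe as ``the generic situation in two-class spectral clustering.'' It is exactly the opposite: for the normalized affinity matrix $A=D^{-1/2}WD^{-1/2}$ with two well-separated clusters, the paper itself notes that there are $k$ eigenvalues close to $1$, so $\lambda_1$ and $\lambda_2$ are both near $1$ while $\lambda_3$ is bounded away, i.e.\ $\lambda_1-\lambda_2\ll\lambda_2-\lambda_3$ in precisely the regime where the theorem is invoked. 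What your argument actually establishes is the bound with $\min(\lambda_1-\lambda_2,\ \lambda_2-\lambda_3)$ in the denominator --- the separation of $\lambda_2$ from the \emph{rest} of the spectrum --- which is the form in which Stewart's result for a simple eigenvector genuinely holds; the version quoted in the paper, with only $\lambda_2-\lambda_3$, is either loose or implicitly a statement about the invariant subspace spanned by $\{v_1,v_2\}$. To control the $j=1$ term without your extra gap assumption you would need additional structure (for instance, that $A$ and $\tilde A$ are both normalized affinities so that the perturbation couples $v_1$ and $v_2$ only weakly), and nothing in the theorem's hypotheses supplies that. As written, your proof does not yield the stated inequality in the setting where the paper actually uses it.
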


This holds provided the gap between the second and third eigenvalue is not close to zero.  This is not the case of data sets with more than two underlying clusters.  The number of eigenvalues close to one is equal to the number of separate clusters.  Consider the simplest example, where there are three single points forming three non-connected clusters.

The affinity matrix $A$ will be the $3\times 3$ identity and will have three eigenvalues of one, hence the gap between the second and third eigenvalue is zero.  
 In general for data sets with $k$ well separated clusters there will be $k$ eigenvalues close to $1$, making the eigengap $\lambda_2-\lambda_3$ close to zero, and destroying the bound of the second eigenvector  in Theorem~\ref{thm:Stewart}. 
 
  Small perturbations in the entries of an affinity matrix can lead to large perturbation in the eigenvectors.  First consider the ideal clustering data set with two underlying clusters where each data point has equal similarity to each intraclass point and is dissimilar to each interclass point. The ordered point would partition the affinity matrix $A$ into a block diagonal matrix,
\[ A=\left( 
\begin{matrix}
	 A_1 & 0 \\ 
	0 & A_2
\end{matrix} \right) \] 
where $A_i$ is a matrix of all ones and $0$ is the zero matrix.  Let 
\[\tilde{A} = A+E\] 
be a perturbation of that matrix where $E$ has entries uniformly distributed from $0$ to $\epsilon$.  Previous analysis used in \cite{FASC} says that when $\epsilon$ is small, the second eigenvector of $A$ and $\tilde{A}$ are close, i.e. satisfy Theorem~\ref{thm:Stewart}.  The second eigenvector is a positive/negative indicator of each point's cluster membership.  The vector is constant for all the points within each cluster, but there is an arbitrary choice of which sign is assigned to each cluster, as can be seen in Figure~\ref{fig:eigenex2c}.  With the correct choice made then the bound in Theorem~\ref{thm:Stewart} holds.  The Euclidean distance, $\|\tilde{v}_2 - v_2\|$, can be large when the sign is chosen incorrectly, but what is preserved, is the space spanned by $\{v_2\}$ and $\{ \tilde{v}_2 \}$.  We define closeness of these subspaces using canonical angles. 

\begin{definition}
Let $\mathcal{V}_k$ and $\tilde{\mathcal{V}}_k$ be subspaces spanned by the orthonormal eigenvectors $v_i, \ldots , v_{i+k}$ and $\tilde{v}_i, \ldots , \tilde{v}_{i+k}$. And let $\gamma_1\leq \ldots \leq \gamma_k$ be the singular values of $\left[ v _i \cdots  v_{i+k}\right]^T \left[ \tilde{v}_i \cdots \tilde{v}_{i+k} \right]$.  Then the values,
\[\theta_i = \cos^{-1} \gamma_i\] 
are called the \textbf{canonical angles} between $\mathcal{V}_k$ and $\tilde{\mathcal{V}}_k$.  \label{def:CanonAng}
\end{definition}
Define $\mathcal{V}_k$ and $\tilde{\mathcal{V}}_k$ to be close if the largest canonical angle, $\theta_1$, is small.  See \cite{REBP70,MPTSS} for more details.    

\begin{figure}[ht]     
	\begin{center}
		\includegraphics[width=\textwidth]{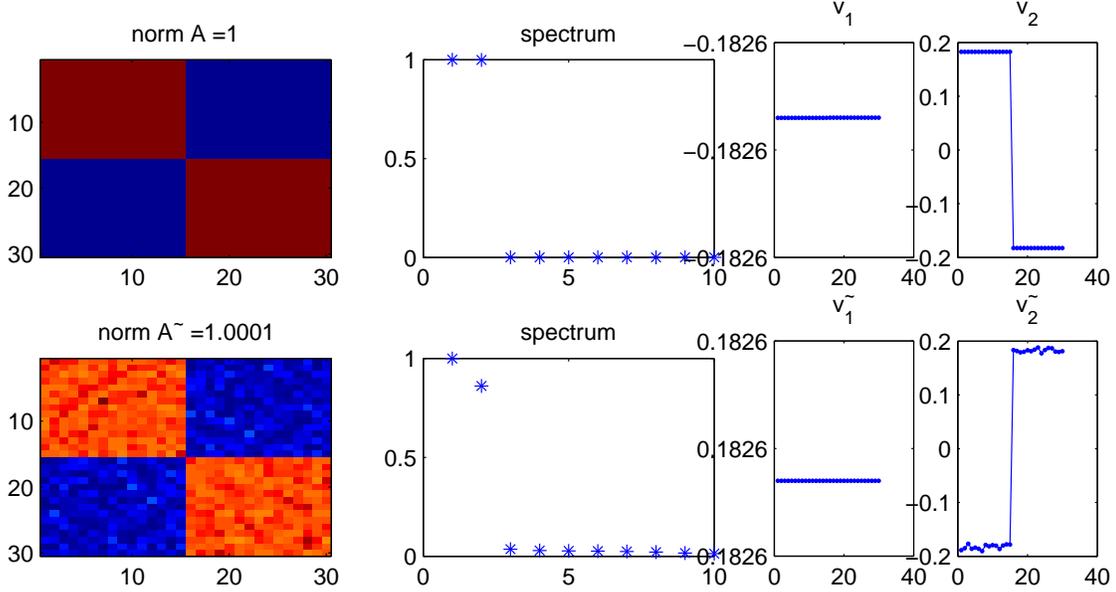}
	\end{center}
	\caption {\footnotesize   30 data points with two underlying clusters.  \textbf{Top:}(from right to left) The affinity matrix $A$, the first ten eigenvalues, the first eigenvector $v_1$ and the second eigenvector $v_2$.  \textbf{Bottom:}(from right to left) The affinity matrix $\tilde{A}$, the first ten eigenvalues, the first eigenvector $\tilde{v}_1$ and the second eigenvector $\tilde{v}_2$.  The canonical angle between $v_2$  and $\tilde{v}_2$ is $\theta_1=.0199 $. }\label{fig:eigenex2c}
\end{figure}

Now consider a data set with three underlying clusters, so
\[ A=\left( 
\begin{matrix}
	 A_1 & 0  & 0\\ 
	 0 & A_2 & 0\\
	0 & 0 & A_3
\end{matrix} \right) .\] 
Here the eigengap between the second and third eigenvalue is small which destroys the bound in Theorem~\ref{thm:Stewart}.  Previous work \cite{scwpd} argues that even though this bound fails, in practice the second eigenvectors can still give the correct coordinates for clustering but provide no justification.  Figure~\ref{fig:eigenex3c} shows that the measure of clusterability of the perturbed spectral coordinates is not captured by the difference between eigenvectors, $\|\tilde{v}_2 - v_2\|$ but by the canonical angle between the subspaces spanned by the eigenvectors.  Even though the Euclidean distance, $\|\tilde{v}_2 - v_2\|$ is large, the clusterability of $v_2$ is maintained in $\tilde{v}_2$.  This robustness of the clusterability can be characterized by the small canonical angle between  $\{v_2,v_3\}$  and  $\{\tilde{v}_2,\tilde{v}_3\}$.

\begin{figure}[ht]     
	\begin{center}
		\includegraphics[width=\textwidth]{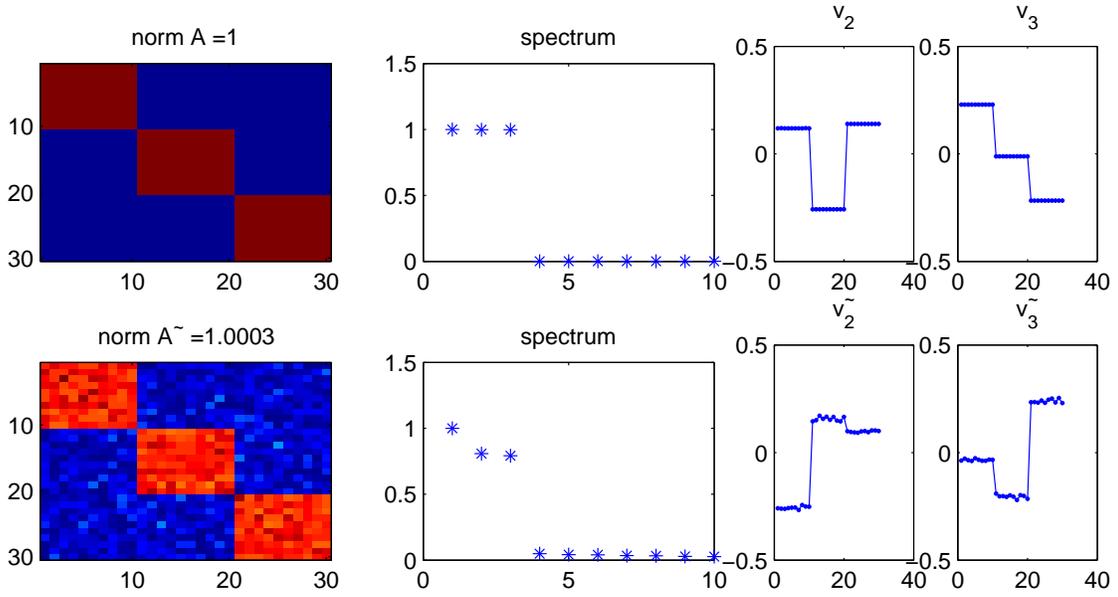}
	\end{center}
	\caption {\footnotesize   30 data points with three underlying clusters.  \textbf{Top:}(from right to left) The affinity matrix $A$, the first ten eigenvalues, the second eigenvector $v_2$ and the third eigenvector $v_3$.  \textbf{Bottom:}(from right to left) The affinity matrix $\tilde{A}$, the first ten eigenvalues, the second eigenvector $\tilde{v}_2$ and the third eigenvector $\tilde{v}_3$.  The normed difference $\|\tilde{v}_2 - v_2\| = 1.7738$ is large where as the largest canonical angle between the column space of $\{v_2,v_3\}$  and  $\{\tilde{v}_2,\tilde{v}_3\}$ is $\theta_1=.0548 $ is still small. }\label{fig:eigenex3c}
\end{figure}

\section{Spectral Clustering on Perturbed Data}\label{sec:method}

Standard spectral clustering methods use the first $k$ eigenvectors of $A$, constructed from local distances between points to provide a $k$ low-dimensional representation of the data, which is used as coordinates for clustering.  Matrix completion and compressed sensing measurements are guaranteed to give a good approximation of these Euclidean distances even when only a small fraction of the entries are observed or the number of measurements is much less than the ambient dimension.  Our method merges the distance preserving dimensionality reduction of compressed sensing and Matrix completion with the power of spectral clustering.  Previous results on spectral clustering on perturbed data are based on perturbation bounds when using the second eigenvector for bipartitioning.  Our analysis generalizes these results to incorporate multi-class clustering using the top $k$ eigenvectors.

Assume that the local distances required for standard spectral clustering are replaced by perturbed distances.  
Define the local distance using perturbed data $X$ as,
\begin{equation} \tilde{d}(x_i, x_j)=\|\tilde{x}_i - \tilde{x}_j\|_2. \label{eq:deftd} \end{equation}
Construct a graph with edge weights
\begin{equation} \tilde{W}(x_i,x_j)=\exp \left(  - \frac{\|\tilde{x}_i- \tilde{x}_j\|_2^2}{2 \sigma}\right).\label{eq:deftW}\end{equation}
Define the symmetric $N \times N$ matrix 
\begin{equation} \tilde{A}=\tilde{D}^{-\frac{1}{2}}\tilde{W}\tilde{D}^{-\frac{1}{2}} \label{eq:deftA} \end{equation}  
where $\tilde{D}_i,_i = \sum_{k=1}^N \tilde{W}(x_i,x_k)$.

The first $k$ eigenvectors $\tilde{V}_k\in\mathbb{R}^{N\times k}$  of $\tilde{A}$ are used as a $k$ dimensional representation of the data.  With these spectral coordinates preserved, k-means is applied to the rows of $\tilde{V}_k$, to cluster the original data points $x_i$.  
%\[\Phi x \rightarrow \tilde{A} \rightarrow \tilde{v}_k. \]

For classification with partially labeled data, the membership of an object is matched to that of its neighbors by performing k-means in the eigenvector domain.  We quantify the error in misclassified data by defining the misclassification rate as,
\[\rho = \frac{1}{N} \sum_{i=1}^{N} \chi_{\{ I_i \neq \tilde{I}_i \}}, \]
where $\chi$ is the indicator function, $I_i$ is the value indicating the class membership of $x_i$ and $\tilde{I}_i$ of $\tilde{x}_i$.

\begin{figure}[ht]
	\begin{center}
$\begin{array}{c c c}
\includegraphics[width=\textwidth]{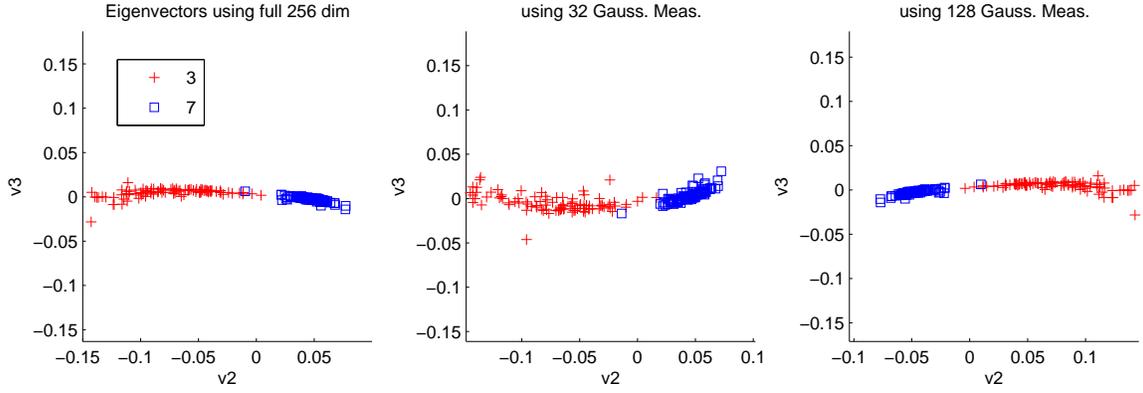}
\end{array}$
	\end{center}
	\caption {\footnotesize   The handwritten digits $\{1,3\}$ data set is projected onto the 2nd and 3rd eigenvectors.  \textbf{Left:} From the graph formed using Euclidean distances between points. \textbf{Center:} Using distances from 30 random Gaussian measurements. \textbf{Right:} 128 random Gaussian measurements.\label{fig:dig_1_3_eig_sort} }
\end{figure}

Often when analyzing high dimensional signals, the underlying structure of interest only has a few degrees of freedom or is sparse in some unknown basis.  We analyze two types of perturbed data $\tilde{X}$ in Section \ref{sec:measure}.   We show that instead of requiring the local distances be made in the large ambient dimension, measurements can be made on the order of the dimension of the hidden underlying point cloud structure.  Using the controllable error from taking compressed sensing measurements ($\tilde{d}(x_i, x_j)=\|\Phi x_i - \Phi x_j\|_2$) and matrix completion ($\hat{d}(x_i, x_j)=\|\hat{x}_i - \hat{x}_j\|_2$), we establish perturbation bounds of the affinity matrix, the eigenvectors, the spectral coordinates and the clustering memberships.

Assume that there is an underlying $s$-sparse representation $y_i$ of the data $x_i$, where $y_i=B x_i$ is a known or unknown unitary transformation of $x_i$.  Let $\Phi$ be a random  $m\times n$ matrix, with Gaussian $\mathcal{N}(0,1)$ entries.
Define the local distance in \eqref{eq:deftd} using $m$ compressed sensing measurements as,
\begin{equation}  \tilde{x}_i=\Phi x_i. \label{eq:deftdCS}\end{equation}
Local distances are preserved under the perturbation of making compressed sensing measurements when the number of measurements $m$, is large enough. 

In many applications such as collaborative filtering, computer vision and wireless sensor networks the data being analyzed maybe lost, damaged and only partially observed.  Matrix completion reconstructs a low-rank data matrix from a small subset of its entries. 

Now assume $X$ is a low rank matrix, under some constraints on the matrix known as the strong incoherence property $X$ can be reconstructed from a fraction of the entries $P_{\Omega}(X)$. 
Define the local distance in \eqref{eq:deftd} using the reconstructed matrix $\hat{X}$, where
\begin{equation*} \tilde{x}_i= \text{ row $i$ of }  \hat{X}. \end{equation*}
Local distances are preserved under the perturbation of matrix completion when $X$ obeys the strong incoherence property.

Our analysis provides rigorous bounds on how small errors in the affinity matrix can affect the spectral coordinates and clusterability.  Our analysis not only applies to compressive spectral clustering but generalizes the current results of spectral clustering on perturbed data to incorporate multi-class clustering with $k$ eigenvectors.  We show perturbation due to compressed measurements and matrix completion, preserve the affinity matrices i.e. for any $0<\epsilon<1$, given the number of measurements or observed entires large enough  then $|A_{i,j}-\tilde{A}_{i,j}|\leq \epsilon$. With this we show the span of the first $k$ eigenvectors of $A$ is close to the span of $k$ eigenvectors $\tilde{A}$, $\| \sin{\theta}\|_F \leq \frac{N \epsilon}{\alpha}$.  We then show given that the matrices $\| A - \tilde{A}  \|_F  \leq N \epsilon$, the perturbed spectral coordinate are within $O(N \epsilon)$ of a unitary transform $Q$ of the unperturbed coordinates, $\|\tilde{v}(i)-v(i) Q\|_2 \leq (1+\sqrt{2})\frac{N \epsilon}{\alpha}$.  When spectral clustering is preformed in the compressed domain or after applying matrix completion, the eigenvectors of $\tilde{A}$ or $\hat{A}$ can replace the eigenvectors of $A$ as coordinates for clustering and classification as seen in Figure~\ref{fig:dig_1_3_eig_sort} and \ref{fig:misclass3facesMS4}.

\section{Robustness of clustering under perturbation of the top $k$ eigenvectors}\label{sec:perteig} 
Previous results in approximate spectral clustering based on the perturbation of the second eigenvector \cite{scwpd}, are limited to bipartitioning and require assumptions on the distributions of the perturbation of the components of $v_2$.  We expand the theory of approximate spectral clustering to partitioning data with $k$ underlying clusters by showing what is preserved under small perturbations is that the column space of the first $k$ eigenvectors of $A$.  When the eigengap between $\lambda_1, \lambda_2, \ldots,\lambda_k$ is small then the column space spanned by their corresponding eigenvectors will be close to the column space spanned by the eigenvectors of the perturbed matrix.  

Let 
\[ V_k=\left[ 
\begin{matrix}
\ & \ & \ & \ \\
v_1 &  v_2 & \ldots & v_k\\
\ & \ & \ & \  \\
\end{matrix} \right] \]
where $v_l$ is the column eigenvector corresponding to the $l^{th}$ largest eigenvalue of $A$.  Similarly define
\[ \tilde{V}_k=\left[ 
\begin{matrix}
\ & \ & \ & \ \\
\tilde{v_1} &  \tilde{v_2} & \ldots & \tilde{v_k}\\
\ & \ & \ & \  \\
\end{matrix} \right] \] for $\tilde{v_l}$ the $l^{th}$ eigenvector of $\tilde{A}$. 

 Given there is an eigengap,  $(\lambda_k - \lambda_{k+1})$, between the first $k$ eigenvalues $\Sigma_k=\text{diag}(\lambda_2, \lambda_2, \ldots , \lambda_k)$ and the last $N-k$ eigenvalues, $\Sigma_{N-k}=\text{diag}(\lambda_{k+1}, \ldots , \lambda_N)$ of $A$, we write the block decomposition of the eigenvectors as $V=[V_k,V_{N-k}]$ where  $V_{N-k}=[v_{k+1}, \ldots , v_N ]$.  So the eigendecomposition can be written as,
\[ \left( V_k V_{N-k} \right)^H  A \left( V_k V_{N-k} \right)= \left(
\begin{matrix}
	\Sigma_k & 0 \\ 
	 0 & \Sigma_{N-k}
\end{matrix} \right), \]
and similarly for $\tilde{A}$,
\[ \left( \tilde{V}_k \tilde{V}_{N-k} \right)^H  \tilde{A} \left( \tilde{V}_k \tilde{V}_{N-k} \right)= \left(
\begin{matrix}
	\tilde{\Sigma}_k & 0 \\ 
	 0 & \tilde{\Sigma}_{N-k} 
\end{matrix} \right). \]

\begin{theorem}
 Let $\lambda_i$, $v_i$, $\tilde{\lambda}_i$, $\tilde{v_i}$ be the $i^{th}$ eigenvalues and eigenvectors of $A$ and $\tilde{A}$ respectively, and let $\Theta=\text{diag}(\theta_1,\theta_2, \ldots, \theta_k)$ be the diagonal matrix of canonical angles between the column space of $V_k=[v_1,v_2, \ldots , v_k ]$ and $\tilde{V}_k =[\tilde{v}_1,\tilde{v}_2, \ldots , \tilde{v}_k ]$.  If there is a gap $\alpha > 0$ such that
\[ | \tilde{\lambda}_k - \lambda_{k+1}| \geq \alpha\] 
and
\[  \tilde{\lambda}_k  \geq \alpha\] 
then
\[ \| \sin{\Theta}\|_F \leq \frac{1}{\alpha}\| A \tilde{V}_k -\tilde{V}_k \tilde{\Sigma}_k \|_F \]
where $\sin\Theta$ is taken entrywise. \label{Thm:WedinEigen}
\end{theorem}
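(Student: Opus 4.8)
The plan is to prove this as a Davis--Kahan / Wedin style $\sin\Theta$ bound, exploiting the eigenvalue separation supplied by the two hypotheses. First I would recall the characterization of canonical angles behind Definition~\ref{def:CanonAng}: since $V=[V_k,\,V_{N-k}]$ is orthogonal, the cosines of the canonical angles between $\mathrm{span}(V_k)$ and $\mathrm{span}(\tilde V_k)$ are the singular values of $V_k^H\tilde V_k$, and consequently the sines are the singular values of the complementary cross term $V_{N-k}^H\tilde V_k$. Hence it suffices to show
\[ \|\sin\Theta\|_F = \|V_{N-k}^H \tilde V_k\|_F \leq \frac{1}{\alpha}\,\|A\tilde V_k - \tilde V_k\tilde\Sigma_k\|_F, \]
so the entire argument reduces to controlling the projection of $\tilde V_k$ onto the complementary eigenspace of $A$.

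The central step is to turn the residual $R := A\tilde V_k - \tilde V_k\tilde\Sigma_k$ into a Sylvester equation for $S := V_{N-k}^H\tilde V_k$. Left-multiplying $R$ by $V_{N-k}^H$ and using that $V_{N-k}$ consists of eigenvectors of the symmetric matrix $A$, so that $V_{N-k}^H A = \Sigma_{N-k}V_{N-k}^H$, I obtain
\[ V_{N-k}^H R = \Sigma_{N-k}\, S - S\,\tilde\Sigma_k. \]
Because both $\Sigma_{N-k}$ and $\tilde\Sigma_k$ are diagonal, this decouples entrywise into $(\lambda_{k+i}-\tilde\lambda_j)\,S_{ij} = (V_{N-k}^H R)_{ij}$, which can be solved coordinate by coordinate as soon as each denominator is bounded away from zero.

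This is where the two eigenvalue hypotheses enter, and verifying the uniform separation $|\lambda_{k+i}-\tilde\lambda_j|\geq\alpha$ for every $i\geq 1$ and $j\leq k$ is the main obstacle. I would argue it by cases on the sign of $\lambda_{k+i}$. By the ordering of eigenvalues, $\tilde\lambda_j\geq\tilde\lambda_k$ and $\lambda_{k+i}\leq\lambda_{k+1}$. If $\lambda_{k+i}\geq 0$, the gap condition $|\tilde\lambda_k-\lambda_{k+1}|\geq\alpha$ (oriented so that $\tilde\lambda_k\geq\lambda_{k+1}$, which is the relevant regime for well-separated clusters) gives $\tilde\lambda_j-\lambda_{k+i}\geq\tilde\lambda_k-\lambda_{k+1}\geq\alpha$; if instead $\lambda_{k+i}<0$, then the positivity condition $\tilde\lambda_k\geq\alpha$ gives $\tilde\lambda_j-\lambda_{k+i}>\tilde\lambda_j\geq\tilde\lambda_k\geq\alpha$. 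In either case $|\lambda_{k+i}-\tilde\lambda_j|\geq\alpha$, so $|S_{ij}|\leq |(V_{N-k}^H R)_{ij}|/\alpha$ for all $i,j$.

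Summing squares then yields $\|S\|_F\leq \frac{1}{\alpha}\,\|V_{N-k}^H R\|_F$, and since $V_{N-k}^H$ has orthonormal rows it is a contraction, so $\|V_{N-k}^H R\|_F\leq\|R\|_F$. Chaining these estimates with the canonical-angle identity from the first step produces the claimed bound $\|\sin\Theta\|_F\leq\frac{1}{\alpha}\|A\tilde V_k-\tilde V_k\tilde\Sigma_k\|_F$. The two points I would double-check carefully are the orientation of the eigengap (that the top block of $\tilde A$ genuinely sits above the tail spectrum of $A$, so that both hypotheses combine into a single two-sided separation of $\alpha$) and the clean identification of $\|\sin\Theta\|_F$ with $\|V_{N-k}^H\tilde V_k\|_F$; the remaining manipulations are routine.
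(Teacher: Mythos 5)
The paper does not actually prove this statement; it simply remarks that it is a reformulation of the Davis--Kahan $\sin\Theta$ theorem and cites \cite{REBP70}. Your proposal supplies the standard proof of that theorem, and it is sound: the identification $\|\sin\Theta\|_F=\|V_{N-k}^H\tilde V_k\|_F$ is the correct complement of the cosine characterization in Definition~\ref{def:CanonAng} (it follows from $\|V_k^H\tilde V_k x\|_2^2+\|V_{N-k}^H\tilde V_k x\|_2^2=\|x\|_2^2$), the reduction of the residual to the decoupled equation $(\lambda_{k+i}-\tilde\lambda_j)S_{ij}=(V_{N-k}^H R)_{ij}$ uses only the symmetry of $A$ and the diagonality of the two blocks, and the final step $\|V_{N-k}^H R\|_F\le\|R\|_F$ is valid because $V_{N-k}$ has orthonormal columns. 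Relative to the paper you have added a proof where there was only a citation, and the proof you give is the standard one.

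The caveat you flag is real, and it is a defect of the theorem's hypotheses rather than of your argument. The conditions $|\tilde\lambda_k-\lambda_{k+1}|\ge\alpha$ and $\tilde\lambda_k\ge\alpha$ do not by themselves exclude the configuration $\lambda_{k+1}\ge\tilde\lambda_k+\alpha$, in which some $\tilde\lambda_j$ with $j\le k$ may coincide with some $\lambda_{k+i}$ and the entrywise division fails; the bound genuinely requires the oriented gap $\tilde\lambda_k-\lambda_{k+1}\ge\alpha$, i.e.\ that the top block of $\tilde A$'s spectrum sits above the tail of $A$'s. This is how Davis--Kahan is stated and how the authors use the result downstream (they invoke $\lambda_k-\lambda_{k+1}\ge\alpha$ together with Mirsky's theorem, which supplies exactly that orientation up to a harmless adjustment of $\alpha$). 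With the orientation in place your separation argument is correct --- in fact the single chain $\tilde\lambda_j-\lambda_{k+i}\ge\tilde\lambda_k-\lambda_{k+1}\ge\alpha$ already covers both signs of $\lambda_{k+i}$, so the hypothesis $\tilde\lambda_k\ge\alpha$ is not needed for the eigenvector version (it belongs to Wedin's singular-vector analogue, which is presumably why it appears in the statement).
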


This is a reformulation of the $\sin \Theta$ Theorem of Davis and Kahan \cite{REBP70}.

Note that $\lambda_k$ and $\tilde{\lambda}_k$ are close by the Mirsky Theorem, so the first condition requires $\alpha$ to be less than the eigengap between $\lambda_k$ and $\lambda_{k+1}$. The second condition requires $\alpha$ to be less than the eigenvalues in the first block $\tilde{\Sigma}_k$.   This extended the perturbation results of the second eigenvector in Theorem~\ref{thm:Stewart}, to bounding the perturbation of the column space of the first $k$ eigenvectors. 

\begin{corollary}\label{thm:proj}
Let $P_{V_k}$ and $P_{\tilde{V}_k}$ be the orthogonal projection on to $V_k$ and $\tilde{V}_k$. If there is a $\alpha>0$ such that $\lambda_k -\lambda_{k+1}\geq \alpha$ and $\lambda_k\geq \alpha$, then,  
%\[\| \sin{\Theta}\|_F \leq \frac{ N\epsilon }{\alpha}.\]
\begin{equation}
\| P_{V_k} - P_{\tilde{V}_k}\|_F \leq \frac{\sqrt{2}}{\alpha}\| A - \tilde{A} \|_F.\label{eq:project}
\end{equation} 
\end{corollary}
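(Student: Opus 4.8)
The plan is to reduce the statement to Theorem~\ref{Thm:WedinEigen} by passing from the canonical angles $\Theta$ to the projection difference through a standard identity, and then bounding the residual $A\tilde{V}_k - \tilde{V}_k\tilde{\Sigma}_k$ by $\|A-\tilde{A}\|_F$.

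First I would invoke the classical relation between orthogonal projections and canonical angles: if $P_{V_k}$ and $P_{\tilde{V}_k}$ project onto two $k$-dimensional subspaces whose canonical angles are $\theta_1,\ldots,\theta_k$, then the nonzero singular values of $P_{V_k}-P_{\tilde{V}_k}$ are exactly the values $\sin\theta_i$, each occurring with multiplicity two. Consequently
\[ \|P_{V_k}-P_{\tilde{V}_k}\|_F = \sqrt{2}\,\|\sin\Theta\|_F. \]
This can be checked directly in the $2\times 2$ block reduction that simultaneously reduces the pair of projections (see \cite{REBP70,MPTSS}), and it is precisely where the factor $\sqrt{2}$ in \eqref{eq:project} originates.

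Next I would feed $\|\sin\Theta\|_F$ into Theorem~\ref{Thm:WedinEigen}, which gives $\|\sin\Theta\|_F \leq \frac{1}{\alpha}\|A\tilde{V}_k - \tilde{V}_k\tilde{\Sigma}_k\|_F$. The residual simplifies because $\tilde{V}_k$ and $\tilde{\Sigma}_k$ form an eigenpair block of $\tilde{A}$, so $\tilde{A}\tilde{V}_k = \tilde{V}_k\tilde{\Sigma}_k$, and therefore
\[ A\tilde{V}_k - \tilde{V}_k\tilde{\Sigma}_k = A\tilde{V}_k - \tilde{A}\tilde{V}_k = (A-\tilde{A})\tilde{V}_k. \]
Submultiplicativity of the Frobenius norm together with $\|\tilde{V}_k\|_2 = 1$ (the columns of $\tilde{V}_k$ are orthonormal) then yields $\|(A-\tilde{A})\tilde{V}_k\|_F \leq \|A-\tilde{A}\|_F$. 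Chaining the three estimates produces exactly \eqref{eq:project}, so the bulk of the argument is routine norm bookkeeping layered on top of the $\sin\Theta$ theorem.

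The one point requiring care, and the main obstacle, is reconciling the hypotheses: the corollary assumes gaps on the eigenvalues $\lambda_k$ of $A$, whereas Theorem~\ref{Thm:WedinEigen} is stated in terms of $\tilde{\lambda}_k$. As noted in the remark following that theorem, Mirsky's theorem gives $|\lambda_k - \tilde{\lambda}_k| \leq \|A-\tilde{A}\|$, so the conditions $\lambda_k-\lambda_{k+1}\geq\alpha$ and $\lambda_k\geq\alpha$ imply the corresponding conditions on $\tilde{\lambda}_k$ after shrinking $\alpha$ by $O(\|A-\tilde{A}\|)$. In the small-perturbation regime this is harmless, and I would either absorb it into the constant or phrase the corollary for a correspondingly adjusted gap; once the hypotheses are aligned, the three displayed estimates combine immediately.
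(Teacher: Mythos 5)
Your proposal follows essentially the same route as the paper: invoke the $\sin\Theta$ theorem, collapse the residual to $(A-\tilde{A})\tilde{V}_k$ via $\tilde{A}\tilde{V}_k=\tilde{V}_k\tilde{\Sigma}_k$, and convert canonical angles to projections through $\|P_{V_k}-P_{\tilde{V}_k}\|_F=\sqrt{2}\,\|\sin\Theta\|_F$. You are in fact somewhat more careful than the paper, which silently applies Theorem~\ref{Thm:WedinEigen} under hypotheses stated for $\lambda_k$ rather than $\tilde{\lambda}_k$ and only addresses the discrepancy via Mirsky's theorem in a remark, whereas you flag and resolve it explicitly.
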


\begin{proof}
Assume there exists a $\alpha>0$ such that $\lambda_k -\lambda_{k+1}\geq \alpha$ and $\lambda_k\geq \alpha$.  With the eigengap $\lambda_k -\lambda_{k+1}\geq \alpha$ and the block decomposition of $V$ and $\tilde{V}$, Theorem~\ref{Thm:WedinEigen} bounds the canonical angles $\theta$, between the column space of $\{v_1,v_2,...,v_k\}$ and $\{\tilde{v}_1,\tilde{v}_2,...,\tilde{v}_k\}$: 
 \[ \| \sin{\theta}\|_F \leq \frac{1}{\alpha} \| (\tilde{A} - E) \tilde{V}_k1 -\tilde{V}_k \tilde{\Sigma}_k \|_F \leq \left( \frac{1}{\alpha} \| \tilde{A} \tilde{V}_k -\tilde{V}_k \tilde{\Sigma}_k \|_F + \| E\tilde{V}_k\|_F \right) \leq \frac{1}{\alpha}\| A - \tilde{A} \|_F. \]
 It is shown in \cite{MPTSS} that the norm from canonical angles and the norm from projections satisfy, 
\begin{equation*}
\| P_{V_k} - P_{\tilde{V}_k}\|_F = \sqrt{2}\|\sin \Theta\|_F.
\end{equation*}
 Combining these gives the result.
\end{proof}

This establishes closeness between the space spanned by the first $k$ eigenvectors of $A$ and the first $k$ eigenvectors of $\tilde{A}$, bounding the difference between the low dimensional embedding of projecting onto the first $k$ eigenvectors of $A$ and $\tilde{A}$.  Euclidean geometry is essentially preserved if the eigengap is satisfied.  This is shown is Theorem~\ref{thm:VkQ} and Corollary~\ref{cor:Vk(i)}

%\subsection{Perturbation of spectral coordinates}

\begin{theorem}\label{thm:VkQ}
Let $V_k$ be the matrix formed by the top $k$ column eigenvectors of $A$ and $\tilde{V}_k$, the matrix formed by the top $k$ eigenvectors of $\tilde{A}$ defined above. If $\lambda_k -\lambda_{k+1}\geq \alpha$ and $\lambda_k\geq \alpha$ then there is a unitary matrix  $Q$ such that 
%minimizes $ \|\tilde{V}_k  -V_kQ\|_F$ , then
\[\|\tilde{V}_k-V_k Q\|_2 \leq (1+\sqrt{2})\frac{1}{\alpha}\| A - \tilde{A} \|_F.\]
\end{theorem}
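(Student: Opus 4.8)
The plan is to produce the unitary $Q$ from a singular value decomposition of the overlap matrix and then control $\tilde{V}_k - V_k Q$ by splitting it into a part lying in the column space of $V_k$ and a part orthogonal to it. Since $A$ and $\tilde{A}$ are symmetric their eigenvectors may be taken orthonormal, so $V_k$ and $\tilde{V}_k$ have orthonormal columns and the $k\times k$ overlap matrix $M = V_k^H \tilde{V}_k$ has singular values equal to the cosines of the canonical angles, i.e.\ $M = U(\cos\Theta)W^H$ with $U,W$ unitary and $\cos\Theta=\mathrm{diag}(\cos\theta_1,\ldots,\cos\theta_k)$, which is exactly the setup of Definition~\ref{def:CanonAng}. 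I would take $Q=UW^H$, the unitary polar factor of $M$, and then use the orthogonal projection $P_{V_k}=V_k V_k^H$ to write
\[ \tilde{V}_k - V_k Q = (I-P_{V_k})\tilde{V}_k + \bigl(P_{V_k}\tilde{V}_k - V_k Q\bigr), \]
bounding each summand in $\|\cdot\|_2$ by the triangle inequality.

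For the first summand, a direct computation gives $\tilde{V}_k^H (I-P_{V_k})\tilde{V}_k = I - M^H M = W(\sin^2\Theta)W^H$, so the singular values of $(I-P_{V_k})\tilde{V}_k$ are exactly $\sin\theta_i$ and hence
\[ \|(I-P_{V_k})\tilde{V}_k\|_2 \le \|(I-P_{V_k})\tilde{V}_k\|_F = \|\sin\Theta\|_F \le \tfrac{1}{\alpha}\| A - \tilde{A} \|_F, \]
where the last inequality is precisely the bound on $\|\sin\Theta\|_F$ obtained from Theorem~\ref{Thm:WedinEigen} in the proof of Corollary~\ref{thm:proj}. This contributes the summand $1$ to the constant.

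For the second summand, note $P_{V_k}\tilde{V}_k - V_k Q = V_k(M-Q)$, and since $V_k$ has orthonormal columns $\|V_k(M-Q)\|_2 = \|M-Q\|_2$. With $Q=UW^H$ we have $M-Q = U(\cos\Theta - I)W^H$, whose singular values are $1-\cos\theta_i$; the elementary inequality $(1-\cos\theta_i)^2 \le 2\sin^2\theta_i$, valid for $\theta_i\in[0,\pi/2]$, then yields
\[ \|M-Q\|_2 \le \|M-Q\|_F \le \sqrt{2}\,\|\sin\Theta\|_F = \|P_{V_k}-P_{\tilde{V}_k}\|_F \le \tfrac{\sqrt{2}}{\alpha}\| A - \tilde{A} \|_F, \]
where the middle equality and the final inequality are exactly Corollary~\ref{thm:proj}. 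Adding the two summands gives $\|\tilde{V}_k - V_k Q\|_2 \le (1+\sqrt{2})\tfrac{1}{\alpha}\| A - \tilde{A} \|_F$, the claimed bound.

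The routine parts are the two singular-value identities and the scalar trigonometric inequality; the one genuinely load-bearing choice is selecting $Q$ to be the polar factor of $M$, since this is what forces the in-subspace residual $M-Q$ to have singular values $1-\cos\theta_i$ and hence be controllable by the canonical angles through Corollary~\ref{thm:proj}. The main thing to watch is consistency of norms: both earlier results are stated in the Frobenius norm, so I would bound the target operator norm by the Frobenius norm at the outset of each summand rather than trying to stay in $\|\cdot\|_2$ throughout, because the projection identity $\|P_{V_k}-P_{\tilde{V}_k}\|_F=\sqrt{2}\|\sin\Theta\|_F$ that supplies the $\sqrt{2}$ is Frobenius-specific.
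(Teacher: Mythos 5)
Your proof is correct and follows essentially the same route as the paper: the same polar-factor choice of $Q$ from the SVD of $V_k^H\tilde{V}_k$, the same splitting of $\tilde{V}_k-V_kQ$ into an out-of-subspace part and $V_k(M-Q)$, and the same reliance on the $\sin\Theta$ bound and the projection identity from Corollary~\ref{thm:proj}. The only (harmless) difference is where the $\sqrt{2}$ lands: you evaluate the out-of-subspace term exactly as $\|\sin\Theta\|_F$ and spend a factor $\sqrt{2}$ on $(1-\cos\theta_i)^2\le 2\sin^2\theta_i$, whereas the paper bounds that term by $\|P_{V_k}-P_{\tilde{V}_k}\|_F=\sqrt{2}\|\sin\Theta\|_F$ and uses the tight $(1-\cos\theta_i)^2\le\sin^2\theta_i$ on the other summand; both yield $(1+\sqrt{2})\frac{1}{\alpha}\|A-\tilde{A}\|_F$.
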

%i.e. $\tilde{V}_k$ can be made arbitrarily close to a unitary transform of $V_k$.  

\begin{proof}

To compare $V_k$ with $\tilde{V}_k$ we find an unitary matrix $Q$ such that $ \| \tilde{V}_k -V_kQ\|_F $ is minimum.  It is shown in \cite{mcgvl} that this can be found by taking the singular value decomposition of $V_k^T\tilde{V}_k$, 
\[Y^T V_k^T\tilde{V}_k Z = \text{diag}(\cos \theta_i),\]
where $\theta_i$ are the canonical angles between the column space of $V_k$ and the column space of $\tilde{V}_k$.  Thus $Q=Y Z^T$ is the orthogonal matrix that minimizes $ \|\tilde{V}_k  -V_kQ\|_F $. 

\begin{align*}
\|\tilde{V}_k-V_k Q\|_2 &\leq \|\tilde{V}_k-V_k V_k^{T}\tilde{V}_k\|_2 + \|V_kV_k^{T}\tilde{V}_k-V_k Q\|_2\\
&\leq \|P_{\tilde{V}_k}\tilde{V}_k-V_k V_k^{T}\tilde{V}_k\|_2 + \|V_k\|_2\|V_k^{T}\tilde{V}_k- Q\|_2\\
&\leq \|P_{\tilde{V}_k}-P_{V_k}\|_2 \|\tilde{V}_k\|_2 + \|V_k\|_2\|V_k^{T}\tilde{V}_k- Q\|_2\\
&= \|P_{\tilde{V}_k}-P_{V_k}\| _2 + \|V_k^{T}\tilde{V}_k- YZ^T\|_2 \ \ \ \ \ \text{ ($\|\tilde{V}_k\|_2=1=\|V_k\|_2$)}\\
&= \|P_{\tilde{V}_k}-P_{V_k}\|_2  + \|Y( \cos \Theta) Z^T -YZ^T\|_2\\
&= \|P_{\tilde{V}_k}-P_{V_k}\|_2  + \|Y\|_2\| \cos \Theta - I \|_2\| Z^T\|_2\\
&= \|P_{\tilde{V}_k}-P_{V_k}\|_2  + \| \cos \Theta - I \|_2\ \ \ \ \ \text{ ($Y$ and $Z$ are unitary)}\\
&\leq \|P_{\tilde{V}_k}-P_{V_k}\|_F  + \| \cos \Theta - I \|_F\\
\end{align*}

Since, $\cos \theta_i$ are singular values, $\cos \theta_i$ are positive for all $i$, which means,
\[1+ 2 \cos^2 \theta_i\leq 1+ 2 \cos\theta_i \]
\[1 -2\cos\theta_i+ \cos^2\theta_i  \leq 1- \cos^2\theta_i\]
\begin{equation}
(\cos\theta_i - 1)^2 \leq \sin^2 \theta_i.\label{eq:cosleqsin}
\end{equation}

So we have,
\begin{align*}
\| \cos \Theta - I \|_F&=\sqrt{\sum_{i=1}^{k}(\cos\theta_i-1)^2}\\
&\leq\sqrt{\sum_{i=1}^{k}(\sin^2\theta_i)} \ \ \text{by \eqref{eq:cosleqsin}}\\
&=\|\sin\Theta \|_F\\
\end{align*}
Applying \eqref{eq:project} to
\[\|\tilde{V}_k-V_k Q\|_2 \leq \|P_{\tilde{V}_k}-P_{V_k}\|_F + \|\sin \Theta\|_F\]
gives the result.
\end{proof}

Each data point can be clustered by applying a clustering algorithm, such as k-means or PCA, to the rows of $V_k$ or $\tilde{V}_k$, see \cite{ESL01,OSCAA}.  The rows of $V_k$ and $\tilde{V}_k$ both provide coordinates for clustering and classification.  To analyze the clusterability we need to compare the rows of $V_k$ and $\tilde{V}_k$.  Corollary~\ref{cor:Vk(i)} shows that the spectral coordinates' Euclidean geometry can preserved under perturbation if the eigen gap is satisfied.  This generalizes the current perturbation results of bipartitioning data by thresholding the second eigenvector to incorporate k-way clustering with k eigenvectors.    

 \begin{corollary}\label{cor:Vk(i)}
 Let $v(i)$ be the $i^{th}$ row of the matrix $V_k$ formed by the top $k$ column eigenvectors of $A$ and $\tilde{v}(i)$ be the $i^{th}$ row of $\tilde{V}_k$ formed by the top $k$ eigenvectors of $\tilde{A}$.  If $\lambda_k -\lambda_{k+1}\geq \alpha$ and $\lambda_k\geq \alpha$ then
 \[\|\tilde{v}(i)-v(i) Q\|_2 \leq (1+\sqrt{2})\frac{1 }{\alpha}\| A - \tilde{A} \|_F\] 
 where $Q$ is the orthogonal matrix that minimizes $ \|\tilde{V}_k  -V_kQ\|_F $.
\end{corollary}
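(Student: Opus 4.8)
The plan is to recognize that this corollary is an immediate consequence of Theorem~\ref{thm:VkQ}, once we observe the elementary relationship between the Euclidean norm of a single row of a matrix and the spectral norm of the whole matrix. Writing $M = \tilde{V}_k - V_k Q$, the quantity $\tilde{v}(i) - v(i)Q$ is precisely the $i^{th}$ row of $M$: row extraction is linear, and $v(i)Q$ is the $i^{th}$ row of $V_k Q$, so the difference of the $i^{th}$ rows equals the $i^{th}$ row of the difference. The entire content of the corollary is therefore the passage from a matrix-level bound to a uniform row-level bound.

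First I would express the $i^{th}$ row of $M$ as $e_i^T M$, where $e_i$ is the $i^{th}$ standard basis vector in $\mathbb{R}^N$. Then the Euclidean norm of this row satisfies
\[\|\tilde{v}(i) - v(i)Q\|_2 = \|e_i^T M\|_2 = \|M^T e_i\|_2 \leq \|M^T\|_2 \, \|e_i\|_2 = \|M\|_2,\]
using submultiplicativity of the operator norm, the identity $\|M^T\|_2 = \|M\|_2$, and $\|e_i\|_2 = 1$. In words, the Euclidean norm of any row of a matrix is controlled by that matrix's spectral norm, and the bound is independent of which row is chosen.

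Finally I would substitute the estimate from Theorem~\ref{thm:VkQ}, namely $\|M\|_2 = \|\tilde{V}_k - V_k Q\|_2 \leq (1+\sqrt{2})\frac{1}{\alpha}\| A - \tilde{A} \|_F$, to obtain the claimed inequality. The matrix $Q$ is the same minimizer coming from the singular value decomposition of $V_k^T \tilde{V}_k$, so the eigengap hypotheses $\lambda_k - \lambda_{k+1} \geq \alpha$ and $\lambda_k \geq \alpha$ are exactly those required to invoke the theorem, with nothing further to verify.

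There is essentially no obstacle here: the corollary is a per-row restatement of Theorem~\ref{thm:VkQ}, and the only genuine step is the operator-norm-dominates-row-norm inequality above. The single point worth emphasizing is that, because $\|e_i\|_2 = 1$ for every $i$, one right-hand side simultaneously bounds every row; this uniformity is what makes the row-level statement meaningful for comparing the spectral coordinates point by point, which is what the subsequent k-means clusterability argument needs.
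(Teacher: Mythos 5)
Your proposal is correct and follows essentially the same route as the paper: the paper's own proof is the one-line chain $\|\tilde{v}(i)-v(i) Q\|_2\leq \|\tilde{V}_k-V_k Q\|_2 \leq (1+\sqrt{2})\frac{1}{\alpha}\| A - \tilde{A} \|_F$, which is exactly your argument. Your version merely makes explicit, via $e_i^T M$, the elementary fact that a row's Euclidean norm is dominated by the matrix spectral norm, a step the paper leaves implicit.
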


\begin{proof} 
\[\|\tilde{v}(i)-v(i) Q\|_2\leq \|\tilde{V}-V Q\|_2 \leq (1+\sqrt{2})\frac{1 }{\alpha}\| A - \tilde{A} \|_F\] 
\end{proof}

\section{Perturbation from Compressible, Incomplete and Inaccurate Measurements}\label{sec:measure}
The theory that we have developed shows how small perturbations in the affinity matrix affect the spectral coordinates.   This analysis is based on having knowledge of the amount error in $\|A-\tilde{A}\|_F$.  Direct knowledge of the error in the affinity matrix arises naturally in many areas such as wireless sensor networks, where information can be lost or corrupted between independent spatially distributed sensors.  More commonly in practice this error is not known directly, but what is observed is the error in the data it self $X-\tilde{X}$.  We analyze how error developed from taking compressible, incomplete and inaccurate measurements affects the affinity matrix and the spectral coordinates.  
 %Standard learning techniques would require an appropriate transformation to a higher dimension where dimensionality reduction is done before a clustering algorithm can be applied.

\subsection{Perturbation of the entries of A from taking compressed sensing measurements}
Traditionally, spectral clustering methods use local Euclidean distance, $d(x_i, x_j)=\|x_i - x_j\|_2$ to create the affinity matrix $A$.  We show that $\tilde{A}$ defined using compressed sensing measurements can be made arbitrarily close to $A$. We prove that $\tilde{V}_k$ can be made arbitrarily close to a unitary transform of $V_k$ and the standard spectral coordinates can be replaced by the compressed spectral coordinates to provide the same clustering assignments.

\subsection{Compressed Sensing Background}		
Dimensionality reduction or low dimensional representation has become a central problem in signal and image processing.  As the dimensionality of data increases, data mining tasks such as clustering and classification can become intractable or costly to obtain.  Traditional clustering algorithms must perform dimensionality reduction to make the problem tractable before they can be applied.  Usually some type of transform is required to be computed to produce a sparse vector to make clustering feasible.  In \cite{CLSDR} it was shown that learning using support vector machines can be done in the measurement domain without having to transform the data to a sparse representation.  We provide detail of how and how well spectral clustering works in the measurement domain through careful analysis. 

Compressed sensing provides techniques for exact recovery of sparse signals $x$ from random measurements $z=\Phi x$, where $\Phi$ is a random $m\times n$ matrix. In general this is an ill-posed problem, but the assumption of sparsity makes recovery possible, \cite{DDCS,SSRIIM,CTDLP05}.  A major result of compressed sensing proves that exact recovery of sparse signals can be guaranteed when the number of measurements $m=O(s\log n/s )$, which is much less than the ambient dimension $n$, see \cite{SSRIIM,DDCS} and the references therein.  A central idea of compressed sensing is the restricted isometry property. 

	\begin{definition}
The \textbf{restricted isometry property} (\textbf{RIP}) holds with parameters $(r,\delta)$ where $\delta \in (0,1)$ if
\begin{equation} 
(1-\delta) \| x\|_2 \leq \| \Phi x \|_2 \leq (1+\delta) \| x \|_2.
\label{eq:RIP}
\end{equation}
holds for all $s$-sparse vectors $x$.
\end{definition}

	It has been shown that random Gaussian, Bernoulli, and partial Fourier matrices satisfy the RIP with high probability~\cite{RV08}.

There is a large body of work that uses random projections for dimensionality reduction.  Most of these methods are shown to work well in practice but have no theoretical uniform guarantees.  Compressed sensing provides numerical bounds on the error produced when taking random measurements with uniform guarantees.

\begin{theorem} 
 Let $W_{i,j}=e^{ - \frac{\|x_i - x_j\|_2^2}{2 \sigma}}$ and $A= D^{-1/2} W D^{-1/2}$ where $D_{i,i}= \sum_{k=1}^N W(x_i,x_k)$.  And let $\tilde{A}=\tilde{D}^{-\frac{1}{2}}\tilde{W}\tilde{D}^{-\frac{1}{2}}$ where $\tilde{D}_{i,i}= \sum_{k=1}^N \tilde{W}(x_i,x_k)$. If $\tilde{W}_{i,j}=e^{ - \frac{\| \Phi x_i - \Phi x_j\|_2^2}{2 \sigma}}$ where the $x_i$s are $s$-sparse and $\Phi$ satisfies the RIP with 
 \[\delta = \frac{\epsilon}{4 \max_{i,j}\left\{ \frac{\|x_i - x_j\|_2^2}{2 \sigma}\right\}},\]
then for $0<\epsilon <1$, 
 \begin{equation*} | \tilde{A}_{i,j} -  A_{i,j} | \leq \epsilon.\end{equation*} \label{thm:AijCS}
\end{theorem}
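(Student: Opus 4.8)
The plan is to propagate the measurement error through the entire construction of $A$ in \eqref{eq:defA} stage by stage: from the sensing map $\Phi$ to the pairwise distances, then to the Gaussian weights $W_{i,j}$, then to the degrees $D_{i,i}$, and finally to the normalized entries $A_{i,j}$. First I would observe that since each $x_i$ is $s$-sparse, every difference $x_i-x_j$ is $2s$-sparse, so applying the RIP \eqref{eq:RIP} (at sparsity level $2s$) to $x_i-x_j$ yields $(1-\delta)\|x_i-x_j\|_2 \le \|\Phi x_i-\Phi x_j\|_2 \le (1+\delta)\|x_i-x_j\|_2$. Squaring and dividing by $2\sigma$ converts this into control of the exponents: writing $a_{i,j}=\|x_i-x_j\|_2^2/(2\sigma)$ and $\tilde a_{i,j}=\|\Phi x_i-\Phi x_j\|_2^2/(2\sigma)$, the identity $|\tilde r^2-r^2|=|\tilde r-r|\,(\tilde r+r)$ gives $|\tilde a_{i,j}-a_{i,j}| \le (2\delta+\delta^2)\,a_{i,j} \le (2\delta+\delta^2) M$, where $M=\max_{i,j} a_{i,j}$ is exactly the quantity appearing in the denominator of the hypothesized $\delta$.

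Next I would pass from the exponents to the weights. Because $u\mapsto e^{-u}$ is $1$-Lipschitz on $[0,\infty)$, we have $|\tilde W_{i,j}-W_{i,j}| = |e^{-\tilde a_{i,j}}-e^{-a_{i,j}}| \le |\tilde a_{i,j}-a_{i,j}|$, and the same estimate in multiplicative form gives $\tilde W_{i,j}/W_{i,j}=e^{a_{i,j}-\tilde a_{i,j}} \in [\,e^{-|\tilde a_{i,j}-a_{i,j}|},\,e^{|\tilde a_{i,j}-a_{i,j}|}\,]$. The crucial structural fact is that the degrees inherit this relative bound: since $D_{i,i}=\sum_k W_{i,k}\ge W_{i,i}=1$ (and likewise $\tilde D_{i,i}\ge 1$), each ratio $\tilde D_{i,i}/D_{i,i}$ is a $W_{i,k}$-weighted average of the ratios $\tilde W_{i,k}/W_{i,k}$ and therefore lies in the same interval.

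Finally I would assemble the estimate for the normalized entries. Splitting $\tilde A_{i,j}-A_{i,j} = (\tilde W_{i,j}-W_{i,j})/\sqrt{\tilde D_{i,i}\tilde D_{j,j}} + A_{i,j}\bigl(\sqrt{D_{i,i}D_{j,j}/(\tilde D_{i,i}\tilde D_{j,j})}-1\bigr)$, the first term is at most $|\tilde W_{i,j}-W_{i,j}|$ because $\tilde D_{i,i},\tilde D_{j,j}\ge 1$, and the second is controlled by $A_{i,j}\le 1$ together with the relative degree bound from the previous step. Substituting $\delta=\epsilon/(4M)$ and using $0<\epsilon<1$ (so that the $\delta^2$ contributions are lower order) makes both contributions sum to at most $\epsilon$. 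I expect the normalization in this last step to be the main obstacle: division by the degrees can in principle amplify errors, and the bound survives only because of the lower bound $D_{i,i}\ge 1$ and because multiplicative perturbations of $W$ are preserved under the symmetric degree normalization. The factor $4$ built into $\delta$ is precisely what absorbs the factor of roughly $2$ lost in squaring the RIP estimate and the additional loss incurred in the normalization, leaving the final entrywise bound at $\epsilon$.
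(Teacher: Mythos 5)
Your pipeline --- RIP applied to $x_i-x_j$ (at sparsity $2s$), then multiplicative control of $W_{i,j}$, then the observation that $\tilde D_{i,i}/D_{i,i}$ is a $W_{i,k}$-weighted average of the ratios $\tilde W_{i,k}/W_{i,k}$, then the lower bound $\tilde D_{i,i}\ge \tilde W_{i,i}=1$ --- is structurally the same as the paper's proof, and your additive split of $\tilde A_{i,j}-A_{i,j}$ is a legitimate substitute for the paper's multiplicative sandwich $A_{i,j}e^{-2\delta C}\le \tilde A_{i,j}\le A_{i,j}e^{2\delta C}$. The gap is that your constants do not close. Write $M=\max_{i,j}a_{i,j}$. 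Squaring the linear RIP gives $|\tilde a_{i,j}-a_{i,j}|\le (2\delta+\delta^2)M=\tfrac{\epsilon}{2}+\tfrac{\epsilon^2}{16M}$; your first term is bounded only by this quantity, and your second term is bounded by $e^{(2\delta+\delta^2)M}-1$, which is itself at least $(2\delta+\delta^2)M$. So the best your estimates can deliver is $|\tilde A_{i,j}-A_{i,j}|\le \epsilon+\tfrac{\epsilon^2}{8M}+(\text{positive convexity terms})$, which is strictly larger than $\epsilon$ for every $\epsilon>0$. The remark that ``the $\delta^2$ contributions are lower order'' does not rescue the argument: your leading-order terms already sum to exactly $\epsilon$, leaving no margin to absorb anything positive, so the final step ``makes both contributions sum to at most $\epsilon$'' fails as stated.

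The paper closes this by invoking the \emph{quadratic} form of the RIP, $(1-\delta)\|x\|_2^2\le\|\Phi x\|_2^2\le(1+\delta)\|x\|_2^2$, so that the exponent perturbation is $\delta M=\epsilon/4$ rather than your $\approx\epsilon/2$; the entire remaining factor of $2$ hidden in $\delta=\epsilon/(4M)$ is then spent on the convexity loss, via $e^{2\delta M}-1=e^{\epsilon/2}-1\le\epsilon$, equivalently $\log(1+\epsilon)\ge\epsilon/2$ on $(0,1)$. (To be fair, the paper's own Definition of RIP is the linear form while its proof uses the quadratic form with the same $\delta$, so your reading of the hypothesis is the more literal one --- but then the stated constant is not achievable by your accounting.) To repair the argument, either invoke the quadratic RIP as the paper does, after which your additive split goes through with room to spare, or accept a larger numerical constant in the denominator of $\delta$.
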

\begin{proof}

%We want \begin{equation} 
% \epsilon >  e^{-\| \Phi x_i-\Phi x_j \|_2^2} - e^{- \| x_i-x_j\|_2^2}  > - \epsilon. \label{lem:want2}
%\end{equation}
By the quadratic form of the restricted isometry property (RIP) from \cite{RV08} we have,
\begin{equation} 
(1-\delta) \| x_i-x_j\|_2^2 \leq \| \Phi x_i-\Phi x_j \|_2^2 \leq (1+\delta) \| x_i-x_j \|_2^2.
\label{eq:qRIP}
\end{equation}
Multiplying by $\frac{-1}{2\sigma}$ and exponentiating gives,
\begin{equation*} e^{- (1-\delta) \frac{\|x_i-x_j\|_2^2}{2\sigma}}\geq e^{\frac{-\| \Phi x_i-\Phi x_j \|_2^2}{2\sigma}} \geq e^{-(1+\delta) \frac{\|x_i-x_j\|_2^2}{2\sigma}},\end{equation*}
which can be rewritten as,
\begin{equation} W_{i,j} e^{\delta \frac{\|x_i-x_j\|_2^2}{2\sigma}}\geq \tilde{W}_{i,j} \geq W_{i,j}e^{-\delta \frac{\|x_i-x_j\|_2^2}{2\sigma}}.\label{eq:Weq1}\end{equation}
Hence,
\begin{equation*} \sum_{j} W_{i,j} e^{\delta \frac{\|x_i-x_j\|_2^2}{2\sigma}}\geq \sum_{j}\tilde{W}_{i,j} \geq \sum_{j}W_{i,j}e^{-\delta \frac{\|x_i-x_j\|_2^2}{2\sigma}}.\end{equation*}
Letting $C=\max_{i,j}\left\{\frac{\|x_i-x_j\|_2^2}{2\sigma}\right\}$,
\begin{equation*} D_{i,i} e^{\delta C}\geq \tilde{D}_{i,i} \geq D_{i,i}e^{-\delta C},\end{equation*}
and taking the square root gives,
\begin{equation} \sqrt{D_{i,i}} e^{\frac{\delta}{2} C}\geq \sqrt{\tilde{D}_{i,i} }\geq \sqrt{D_{i,i}}e^{-\frac{\delta}{2} C}.\label{eq:Weq2}\end{equation}
Dividing \eqref{eq:Weq1} by $\sqrt{\tilde{D}_{i,i}}\sqrt{\tilde{D}_{j,j}}$,
\begin{equation*} \frac{W_{i,j}}{\sqrt{\tilde{D}_{i,i}}\sqrt{\tilde{D}_{j,j}}} e^{\delta C}\geq \tilde{A}_{i,j} \geq \frac{W_{i,j}}{\sqrt{\tilde{D}_{i,i}}\sqrt{\tilde{D}_{j,j}}}e^{-\delta C},\end{equation*}
and by using inequality \eqref{eq:Weq2} gives,
\begin{equation*} \frac{W_{i,j}}{\sqrt{D_{i,i}}\sqrt{D_{j,j}}e^{ -\delta C}} e^{ \delta C}\geq \tilde{A}_{i,j} \geq \frac{W_{i,j}}{\sqrt{D_{i,i}}\sqrt{D_{j,j}}e^{ \delta  C}}e^{- \delta C}.\end{equation*}
Subtracting by $A_{i,j}$ we have,
\begin{equation*} A_{i,j}(e^{ 2 \delta C} -1)  \geq \tilde{A}_{i,j} -  A_{i,j}\geq A_{i,j}(e^{- 2\delta C}-1)\end{equation*}
Since $e^{ 2 \delta C}-1 \geq 1- e^{ -2 \delta C}$ for all $\delta>0$, and $A_{i,j}=\frac{W_{i,j}}{D_{i,j}^{-1/2}  D_{i,j}^{-1/2}}\leq 1$
\begin{equation}|\tilde{A}_{i,j} -  A_{i,j}|\leq e^{ 2 \delta C}-1.\label{eq:Aeq1}\end{equation}

To bound \eqref{eq:Aeq1} by $\epsilon$ requires $\delta$ to satisfy, 
\begin{equation*} \delta \leq \frac{\log ( \epsilon +1)}{2C} \ \ \ \ \ \forall i,j.  \end{equation*}

By simple calculation it can be shown that $ \frac{\log ( \epsilon +1)}{2C} \geq \frac{1}{4C}\epsilon  $ for $0<\epsilon<1 $.  
%\textbf{Claim}. $ \frac{\log ( \epsilon +1)}{2C} \geq \frac{1}{4C}\epsilon  $ for $0<\epsilon<1 $.
%\begin{proof} We want to show $\log(\epsilon+1)\geq \frac{1}{2}\epsilon$, i.e. $\log(\epsilon+1) - \frac{1}{2}\epsilon\geq 0$.  This holds for $\epsilon=0,1$.  Now $\frac{d}{d\epsilon}(\log(\epsilon+1)-\frac{1}{2}\epsilon)=\frac{1}{1+\epsilon}-\frac{1}{2}$, and this is 0 only when $\epsilon =1$, so for $\epsilon \in [0,1]$, the minimum of $\log(\epsilon+1) - \frac{1}{2}\epsilon$ occurs at 0, so that $\log(\epsilon+1) - \frac{1}{2}\epsilon\geq 0$ for all $\epsilon \in (0,1)$. \end{proof}
%Thus, we need $\delta = \frac{\epsilon}{4 C}$, then
% \begin{equation*} \delta =  \frac{\epsilon}{4 C} \leq \frac{\log(1+\epsilon)}{2C},\end{equation*}
Thus, we need $\delta = \frac{\epsilon}{4 C}$,
so that
\begin{equation*} | \tilde{A}_{i,j} -  A_{i,j} | \leq \epsilon,\end{equation*}
 holds for all $i,j$.  
  \end{proof}

\begin{corollary}
Let $\Phi$ be a $m\times n$ Gaussian matrix and let $0<\epsilon <1$, $\delta = \frac{\epsilon}{4 \max_{i,j}\left\{ \frac{\|x_i - x_j\|_2^2}{2 \sigma}\right\}}$.  Then with high probability, $\frac{1}{\sqrt{m}}\Phi$ satisfies RIP  with parameters $(r,\delta)$  provided that the number of measurements, 
\[m=O(\frac{r}{\epsilon^2} \log \frac{n}{\epsilon^2r}).\]
 %\begin{equation*} | \tilde{A}_{i,j} -  A_{i,j} | < \epsilon,\end{equation*} 
\end{corollary}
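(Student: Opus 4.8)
The plan is to reduce the statement to a known measurement bound for the restricted isometry property of Gaussian matrices and then substitute the particular value of $\delta$ dictated by Theorem~\ref{thm:AijCS}. The starting point is the standard fact, established in \cite{RV08}, that a matrix with i.i.d.\ $\mathcal{N}(0,1)$ entries, rescaled by $1/\sqrt{m}$, satisfies the RIP of order $r$ with isometry constant $\delta$ with high probability as soon as the number of rows obeys $m \geq C_0\, \delta^{-2}\, r \log\!\big(n/(\delta^{2} r)\big)$ for an absolute constant $C_0$, the failure probability decaying exponentially in $m$. I would state this result first, taking care that the form of the logarithmic factor matches the one in the cited bound, since it is precisely this $\delta^2$ in the denominator of the logarithm that will ultimately produce the $\epsilon^{-2}$ appearing inside the logarithm of the claimed estimate. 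The ``with high probability'' qualifier of the corollary is then inherited verbatim from this cited bound.

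Next I would simply insert $\delta = \epsilon / (4C)$, where $C = \max_{i,j}\{\|x_i-x_j\|_2^2/(2\sigma)\}$ is the data-dependent quantity fixed by the theorem's hypothesis. This gives $\delta^{-2} = 16\, C^2\, \epsilon^{-2}$, so the prefactor becomes $16\, C_0\, C^2\, r\,\epsilon^{-2}$, while the argument of the logarithm becomes $16\, C^2\, n/(\epsilon^2 r)$.

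The final step is to treat $C$ (and hence $\sigma$ and the maximal pairwise distance) as a fixed constant that does not depend on $m$, $n$, $r$, or $\epsilon$. Absorbing $16\, C_0\, C^2$ into the implied constant of the $O(\cdot)$ notation removes the prefactor dependence on $C$, and the additive term $\log(16\, C^2)$ inside the logarithm is dominated by $\log(n/(\epsilon^2 r))$ in the regime of interest, leaving $m = O\big(\tfrac{r}{\epsilon^2}\log\tfrac{n}{\epsilon^2 r}\big)$, exactly as claimed.

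The only genuine obstacle is matching the precise form of the RIP measurement bound to the cited reference --- in particular ensuring the logarithmic factor carries a $\delta^{2}$ (equivalently $\epsilon^{2}$) in its denominator, so that substitution reproduces $\log(n/(\epsilon^2 r))$ rather than the weaker $\log(n/r)$ that would follow from a bound of the form $m \gtrsim \delta^{-2} r \log(n/r)$. Everything else is bookkeeping: the substitution is algebraically routine, and the suppression of $C$ into the $O$-notation is legitimate precisely because $C$ is determined by the fixed data $\{x_i\}$ and the fixed bandwidth $\sigma$, not by the asymptotic parameters $m$, $n$, $r$, $\epsilon$.
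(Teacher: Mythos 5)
Your proposal is correct and follows essentially the same route as the paper: invoke the Gaussian RIP measurement bound $m \gtrsim \delta^{-2} r \log\bigl(n/(\delta^{2} r)\bigr)$ from \cite{RV08}, substitute $\delta = \epsilon/(4C)$ with $C = \max_{i,j}\{\|x_i-x_j\|_2^2/(2\sigma)\}$, and absorb the data-dependent constant into the $O(\cdot)$. The only detail the paper adds is the observation that differences $x_i - x_j$ of $s$-sparse vectors are $2s$-sparse, so the RIP order actually invoked is $2s$ rather than $r$; this changes nothing in the asymptotic bound.
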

\begin{proof}
Assume $x_i$ is $s$-sparse then $x_i-x_j$ is $2s$-sparse.  By the Gaussian measurement matrix Theorem in ~\cite{RV08} the number of measurements required for $\Phi$ to satisfy the RIP with parameters $(2s,\delta)$ with high probability is $m\geq \frac{c(t)2s}{\delta^2} \log \frac{n}{\delta^2 2s}$.  For $\delta = \frac{\epsilon}{4 \max_{i,j}\{ \frac{\|x_i - x_j\|_2^2}{2 \sigma}\}}$ this gives,
 \[m=O(\frac{2s}{(\frac{\epsilon}{4 C})^2} \log \frac{n}{(\frac{\epsilon}{4 C})^2 2s})=O(\frac{s}{\epsilon^2} \log \frac{n}{\epsilon^2 s}).\]
\end{proof}

  The entries of $\tilde{A}$ can be made arbitrarily close to the entries of $A$ when taking $m=O(\frac{s}{\epsilon^2} \log \frac{n}{\epsilon^2s})$ random Gaussian measurements.  Additional bounds on the number of measurements can be a found when using matrices with random Bernoulli entries or random rows of the Fourier transform ~\cite{RV08}.  Random Gaussian matrices are used here to demonstrate the order of the number of measurements required to achieve our bounds.  
 
 \begin{corollary}\label{cor:AnormCS}
 Let $A$ and $\tilde{A}$ be define as in Theorem~\ref{thm:AijCS}, then
 \[\| A-\tilde{A}\|_F \leq N  \epsilon.\]
  \end{corollary}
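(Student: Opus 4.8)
The plan is to derive this Frobenius-norm bound directly from the entrywise estimate already established in Theorem~\ref{thm:AijCS}, since all of the analytic work has in fact been done there. Recall that Theorem~\ref{thm:AijCS} guarantees $|\tilde{A}_{i,j} - A_{i,j}| \leq \epsilon$ for \emph{every} pair of indices $(i,j)$, under the stated RIP hypothesis on $\Phi$. The only remaining task is to pass from this uniform control on individual entries to a bound on the global matrix norm, which is a purely combinatorial counting step.

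First I would write out the definition of the Frobenius norm explicitly as
\[
\|A - \tilde{A}\|_F = \left( \sum_{i=1}^{N} \sum_{j=1}^{N} |A_{i,j} - \tilde{A}_{i,j}|^2 \right)^{1/2}.
\]
Next I would apply the entrywise bound from Theorem~\ref{thm:AijCS} inside the sum: since each summand satisfies $|A_{i,j} - \tilde{A}_{i,j}|^2 \leq \epsilon^2$, and since $A$ and $\tilde{A}$ are $N \times N$ matrices with exactly $N^2$ entries, the double sum is bounded above by $N^2 \epsilon^2$. Taking the square root then yields
\[
\|A - \tilde{A}\|_F \leq \sqrt{N^2 \epsilon^2} = N \epsilon,
\]
which is precisely the claimed estimate.

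There is no genuine obstacle here: the content of the corollary lies entirely in the uniform entrywise bound supplied by Theorem~\ref{thm:AijCS}, and the passage to the Frobenius norm is simply the observation that summing $N^2$ terms each at most $\epsilon^2$ gives at most $N^2 \epsilon^2$. The one point worth flagging is that the factor $N$ (rather than, say, a dimension-free constant) is unavoidable and arises solely from this counting over all matrix entries; it is this $N$ that propagates into the $O(N\epsilon)$ bounds on the spectral coordinates in Corollary~\ref{cor:Vk(i)} once \eqref{eq:project} is invoked. I would therefore keep the proof to these few lines and emphasize that it is the entrywise control, not the norm conversion, that carries the weight.
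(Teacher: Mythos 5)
Your argument is correct and is essentially identical to the paper's own proof: both expand the Frobenius norm entrywise, apply the uniform bound $|A_{i,j}-\tilde{A}_{i,j}|\leq\epsilon$ from Theorem~\ref{thm:AijCS} to each of the $N^2$ terms, and take the square root to obtain $N\epsilon$. No issues.
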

  \begin{proof}
  \begin{equation*}\| A-\tilde{A}\|_F =\sqrt{\sum_{i,j}(A_{i,j}-\tilde{A}_{i,j})^{2}}
  \leq \sqrt{\sum_{i,j}\epsilon^{2}} = \sqrt{N^2  \epsilon^2} 
  \end{equation*}
  \end{proof}

Theorem \ref{thm:VkCS} illustrates the robustness of the spectral clustering coordinates under small perturbations from taking compressed measurements. 

\begin{theorem}
Let $A$ and $\tilde{A}$ be defined as in Theorem~\ref{thm:AijCS} with $v(i)$ and $\tilde{v}(i)$ defined in Corollary~\ref{cor:Vk(i)}. Given there is a $\alpha>0$ such that $\lambda_k -\lambda_{k+1}\geq \alpha$ and $\lambda_k\geq \alpha$, if the number of measurements
\[m=O(\frac{r}{\epsilon^2} \log \frac{n}{\epsilon^2r}),\] then with high probability,
\[\|\tilde{v}(i)-v(i) Q\|_2 \leq (1+\sqrt{2})\frac{N  \epsilon}{\alpha}.\]
\label{thm:VkCS}

\end{theorem}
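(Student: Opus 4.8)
The plan is to chain together the four results already established in this section, so that the proof is essentially a composition in which the only probabilistic ingredient is the RIP guarantee. First I would fix the event on which everything holds: by the preceding corollary on Gaussian measurement matrices, taking $m=O(\frac{r}{\epsilon^2}\log\frac{n}{\epsilon^2 r})$ measurements guarantees, with high probability, that $\frac{1}{\sqrt m}\Phi$ satisfies the RIP with parameters $(r,\delta)$ for the prescribed $\delta=\frac{\epsilon}{4C}$, where $C=\max_{i,j}\{\|x_i-x_j\|_2^2/(2\sigma)\}$. This is the single place randomness enters; all subsequent bounds are deterministic and hold on this event, so the phrase ``with high probability'' in the conclusion is inherited directly from here.

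Conditioned on that event, I would apply Theorem~\ref{thm:AijCS}, whose hypothesis is exactly that $\Phi$ satisfies the RIP with this $\delta$, to obtain the entrywise affinity bound $|\tilde A_{i,j}-A_{i,j}|\le\epsilon$ for all $i,j$. Corollary~\ref{cor:AnormCS} then converts this uniform entrywise control into the Frobenius bound $\|A-\tilde A\|_F\le N\epsilon$, by summing $N^2$ terms each at most $\epsilon^2$ and taking the square root. At this stage the problem has been reduced entirely to the abstract perturbation setting of Section~\ref{sec:perteig}, now with a concrete value for $\|A-\tilde A\|_F$.

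Finally, since the hypotheses $\lambda_k-\lambda_{k+1}\ge\alpha$ and $\lambda_k\ge\alpha$ are precisely those assumed in the theorem, I would invoke Corollary~\ref{cor:Vk(i)} with $Q$ the orthogonal matrix minimizing $\|\tilde V_k-V_k Q\|_F$ to get $\|\tilde v(i)-v(i)Q\|_2\le(1+\sqrt2)\frac{1}{\alpha}\|A-\tilde A\|_F$, and then substitute the Frobenius bound to conclude $\|\tilde v(i)-v(i)Q\|_2\le(1+\sqrt2)\frac{N\epsilon}{\alpha}$.

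The only genuine subtlety, and the step I would check most carefully, is that the eigengap hypotheses line up across the lemmas: the $\sin\Theta$ bound of Theorem~\ref{Thm:WedinEigen} is phrased with the perturbed eigenvalues $\tilde\lambda_k$, whereas Corollary~\ref{cor:Vk(i)} is stated with the unperturbed $\lambda_k$. Reconciling these requires the Weyl/Mirsky estimate noted after Theorem~\ref{Thm:WedinEigen}, namely that $\lambda_k$ and $\tilde\lambda_k$ differ by at most $\|A-\tilde A\|$, so that choosing $\alpha$ strictly below the true eigengap keeps both conditions valid once $N\epsilon$ is small enough. I would therefore verify that the chosen $\epsilon$ (equivalently, the measurement count $m$) is small enough for this reconciliation to hold; everything else is a mechanical substitution into the already-proved bounds.
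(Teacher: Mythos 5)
Your proposal is correct and follows essentially the same route as the paper's own proof: invoke the RIP-based entrywise bound of Theorem~\ref{thm:AijCS}, convert it to the Frobenius bound $\|A-\tilde A\|_F\le N\epsilon$, and feed that into Theorem~\ref{thm:VkQ} and Corollary~\ref{cor:Vk(i)}. Your closing remark about reconciling the $\tilde\lambda_k$ versus $\lambda_k$ eigengap hypotheses via the Mirsky estimate is a point the paper only addresses in passing after Theorem~\ref{Thm:WedinEigen}, but it does not change the argument.
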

\begin{proof}
Assume there exists a $\alpha>0$ such that $\lambda_k -\lambda_{k+1}\geq \alpha$ and $\lambda_k\geq \alpha$.   Given an $\epsilon>0$, $| \tilde{A}_{i,j} -  A_{i,j} | \leq \epsilon$ by Theorem~\ref{thm:AijCS}.  This gives 
\[ \| A-\tilde{A}\|_F =\sqrt{\sum_{i,j}(A_{i,j}-\tilde{A}_{i,j})^{2}} \leq \sqrt{\sum_{i,j}\epsilon^{2}} = \sqrt{N^2  \epsilon^2} . \] 
With the eigengap $\lambda_k -\lambda_{k+1}\geq \alpha$ and the block decomposition of $V$ and $\tilde{V}$, Theorem~\ref{thm:VkQ} bounds the column space of $\{v_1,v_2,...,v_k\}$ and $\{\tilde{v}_1,\tilde{v}_2,...,\tilde{v}_k\}$ by \[\|\tilde{V}_k-V_k Q\|_2 \leq (1+\sqrt{2})\frac{1}{\alpha}\| A - \tilde{A} \|_F.\]  The spectral clustering coordinates \[\|\tilde{v}(i)-v(i) Q\|_2 \leq 1+\sqrt{2})\frac{N  \epsilon}{\alpha}\]
can then be bounded by Corollary~\ref{cor:Vk(i)}. Combining these gives the result. 
\end{proof}

\subsection{Perturbation of the entries of A from missing measurements}
%\subsection{Matrix Completion}	
The size and complexity of data grows exponentially with advancing technology.  Often data can be lost, noisy or corrupted or acquiring data could be costly to obtain, as in medical MRI acquisition.  Clustering algorithms can not be applied directly to incomplete data.   Given a fraction of the entries of a one wishes to recover the missing entries under the constraint that the unknown matrix is low rank.   This non-convex low rank minimization can be solved using nuclear-norm convex relaxation.   Matrix completion is the task of recovering an unknown matrix form a small subset of its entries.  This is possible for low rank matrices under some constraints on the matrix known as the strong incoherence property with $O(rank(X)\times N \log^2 N)$ samples via nuclear-norm convex optimization \cite{emcco09,MCWN}.

\begin{definition}
A matrix $X$ obeys the \textbf{strong incoherence property} with parameter $\ipp$ if the following hold.
\begin{enumerate}
	\item Let $P_Z$ (resp. $P_Y$) be the orthogonal projection onto the singular vectors $z_1,...,z_r$ (resp. $y_1,...,y_r$) of $X\in \mathbb{R}^{N \times n}$ of rank $r$.  For all pairs $(a,a')\in [N]\times [N]$ and $(b,b')\in [n]\times [n]$,
	\[ \left| \langle e_a,P_Z e_{a'}\rangle -\frac{r}{N}1_{a=a'} \right| \leq\ipp\frac{\sqrt{r}}{N},\]
	\[ \left| \langle e_b,P_Y e_{b'}\rangle -\frac{r}{n}1_{b=b'} \right|\leq\ipp\frac{\sqrt{r}}{n}.\]
	\item Let $\Xi$ be the "sign matrix" defined by
	\[\Xi=\sum_{i\in[r]}z_i y_i^*. \]
	For all $(a,b)\in[N]\times[n]$,
	\[ |\Xi_{a,b}|\leq \ipp \frac{\sqrt{r}}{\sqrt{Nn}}.\]
\end{enumerate}

\end{definition}

We will make use of the following theorem, which is a reformulation of Theorem 7 rigorously proved in \cite{MCWN}.

\begin{theorem}{\cite{MCWN}}
Let $X\in \mathbb{R}^{N \times n}$ be a fixed matrix of rank $r$ obeying the strong incoherence property with parameter $\ipp$. Suppose we observe a fraction $p=(\# \text{ of entries observed})/(N n)$ of entries of $X$ with locations sampled uniformly at random with noise $\|P_{\Omega}(X)-P_{\Omega}(\hat{X})\|_F\leq \delta $.  Then with high probability the solution to the matrix completion problem $\hat{X}$ satisfies, 
 \[\|X-\hat{X}\|_F\leq 4\sqrt{\frac{(2+p)\min(N,n)}{p}}\delta +2\delta.\] \label{thm:Xhat}
\end{theorem}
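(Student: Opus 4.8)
The plan is to derive this statement directly from Theorem~7 of \cite{MCWN}, since all of the analytic work of controlling the nuclear-norm minimizer under noisy, partial observations is carried out there; the task here is only to translate that result into the notation and parametrization used above. First I would recall the original statement, which is phrased in terms of the raw number of observed entries $m$ and a noisy observation model, where the estimator $\hat{X}$ is the solution of the nuclear-norm program $\min \|M\|_*$ subject to a feasibility constraint of the form $\|P_{\Omega}(M)-P_{\Omega}(X)\|_F \le \delta$ on the sampled set $\Omega$. The hypotheses there are exactly the strong incoherence property with parameter $\mu$ together with the assumption that the $m$ locations comprising $\Omega$ are drawn uniformly at random, which match our assumptions verbatim, so no additional structural conditions on $X$ need to be verified.

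The reparametrization step is to set $p = m/(Nn)$, so that the sampling \emph{fraction} replaces the sample \emph{count}. After this change of variable, the Candes--Plan bound $\|\hat{X}-X\|_F \le 4\sqrt{(2+p)\min(N,n)/p}\,\delta + 2\delta$ becomes precisely the displayed inequality. I would then reconcile the two noise conventions: here the hypothesis is stated directly as a feasibility bound $\|P_{\Omega}(X)-P_{\Omega}(\hat{X})\|_F \le \delta$ on the reconstruction, whereas \cite{MCWN} phrases the assumption on the underlying observation noise and obtains feasibility of $\hat{X}$ as a consequence. Since in both formulations the relevant sampled residual is controlled by the same $\delta$, the error estimate carries over, and the ``with high probability'' qualifier is inherited unchanged because it refers only to the random draw of the location set $\Omega$, not to the noise.

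The only delicate point --- and the step I expect to be the main obstacle --- is the bookkeeping required to confirm that the constants $4$ and $2$ and the factor $\sqrt{(2+p)\min(N,n)/p}$ survive the reparametrization without alteration, and that the role of $\min(N,n)$ is consistent under the convention that $X \in \mathbb{R}^{N\times n}$ (in particular, that the smaller dimension enters the bound regardless of whether rows or columns index the data). Because the underlying theorem is already rigorously established in \cite{MCWN}, no further spectral or concentration estimates are needed; once the dictionary between the sample-count and sample-fraction formulations and between the two noise conventions is verified, the argument is complete.
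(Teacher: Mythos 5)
Your proposal is correct and matches the paper's treatment: the paper offers no proof of this statement at all, presenting it only as ``a reformulation of Theorem 7 rigorously proved in \cite{MCWN},'' so the citation-plus-dictionary argument you outline is exactly what is implicitly being invoked. Your attention to the sample-count versus sample-fraction parametrization and the two noise conventions is sensible bookkeeping, though Cand\`es--Plan already state their bound in terms of the fraction $p$, so the constants carry over verbatim.
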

This provides a bound on the recovery error from matrix completion with noisy observations.  Using matrix completion with $O(r\times N \log^2 N)$ samples to define a local distance we show that $\hat{A}$ can be made arbitrarily close to $A$.

\begin{theorem}
Let $W_{i,j}=e^{ - \frac{\|x_i - x_j\|_2^2}{2 \sigma}}$ and $A= D^{-1/2} W D^{-1/2}$ where $D_{i,i}= \sum_{k=1}^N W(x_i,x_k)$.  And let $\hat{W}_{i,j}=e^{ - \frac{\| \hat{x}_i - \hat{x}_j\|_2^2}{2 \sigma}}$ and $\hat{A}=\hat{D}^{-\frac{1}{2}}\hat{W}\hat{D}^{-\frac{1}{2}}$ where $\hat{D}_{i,i}= \sum_{k=1}^N \hat{W}(x_i,x_k)$. 
Assume the data matrix $X$ obeys the strong incoherence property with parameter $\ipp$ under the same assumptions of Theorem~\ref{thm:Xhat}. If 
\[4\sqrt{\frac{(2+p)\min(N,n)}{p}}\delta +2\delta \leq \frac{\epsilon}{16 \max_{i,j} \left\{ \frac{\|x_i-x_j\|_2}{2\sigma} \right\}}\] then with high probability, 
 \begin{equation*} | \hat{A}_{i,j} -  A_{i,j} | \leq \epsilon.\end{equation*} 
\label{thm:AijMC}
\end{theorem}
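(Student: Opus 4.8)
The plan is to mirror the compressed-sensing argument of Theorem~\ref{thm:AijCS}, with the additive recovery guarantee of matrix completion playing the role that the RIP played there. Write $\eta = 4\sqrt{\frac{(2+p)\min(N,n)}{p}}\,\delta + 2\delta$ for the right-hand side of Theorem~\ref{thm:Xhat}, so that with high probability $\|X-\hat{X}\|_F \le \eta$. Since $x_i$ and $\hat{x}_i$ are the $i^{th}$ rows of $X$ and $\hat{X}$, the single-row error satisfies $\|x_i-\hat{x}_i\|_2 \le \|X-\hat{X}\|_F \le \eta$ for every $i$. This is the one place the completion theorem enters, and it converts the global Frobenius bound into a uniform per-point bound.

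First I would propagate this into a bound on the pairwise distances. Two applications of the triangle inequality give $\big|\,\|\hat{x}_i-\hat{x}_j\|_2 - \|x_i-x_j\|_2\,\big| \le \|x_i-\hat{x}_i\|_2 + \|x_j-\hat{x}_j\|_2 \le 2\eta$. Writing $d_{ij}=\|x_i-x_j\|_2$ and $\hat{d}_{ij}=\|\hat{x}_i-\hat{x}_j\|_2$, squaring turns this additive distance bound into control on the exponents: $|\hat{d}_{ij}^2 - d_{ij}^2| \le 2 d_{ij}\,(2\eta) + (2\eta)^2$. With $C' = \max_{i,j}\{\|x_i-x_j\|_2/(2\sigma)\}$ so that $d_{ij} \le 2\sigma C'$, the leading term is controlled by $\frac{1}{2\sigma}\,(2 d_{ij})(2\eta) \le 4 C'\eta$, while the quadratic term $\frac{(2\eta)^2}{2\sigma}$ is of lower order and absorbed into the constant. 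Hence, setting $\beta = \max_{i,j}\frac{|\hat{d}_{ij}^2 - d_{ij}^2|}{2\sigma}$, the hypothesis $\eta \le \frac{\epsilon}{16 C'}$ yields $\beta \le \frac{\epsilon}{4}$.

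Having reduced everything to control of the exponent, I would then run the normalization argument of Theorem~\ref{thm:AijCS} verbatim. The exponent bound reads, in multiplicative form, $W_{i,j}\,e^{-\beta} \le \hat{W}_{i,j} \le W_{i,j}\,e^{\beta}$; summing over $j$ preserves this to give $D_{i,i}e^{-\beta} \le \hat{D}_{i,i} \le D_{i,i}e^{\beta}$, and taking square roots controls the symmetric normalization $\hat{D}^{-1/2}\hat{W}\hat{D}^{-1/2}$. Dividing, subtracting $A_{i,j}$, and using $A_{i,j}\le 1$ exactly as before produces $|\hat{A}_{i,j}-A_{i,j}| \le e^{2\beta}-1$. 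Finally, since $\frac{\log(1+\epsilon)}{2}\ge \frac{\epsilon}{4}$ for $0<\epsilon<1$, the bound $\beta \le \epsilon/4$ gives $e^{2\beta}-1 \le \epsilon$, which is the claim.

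The main obstacle is the second step: unlike the RIP, matrix completion supplies only an \emph{additive} error in the entries of the data matrix, so there is no direct relative control on the squared distances. The work is in converting $\|X-\hat X\|_F\le\eta$ into a uniform bound on the exponents $\hat d_{ij}^2/(2\sigma)$, which forces the cross term $2d_{ij}\eta$ to be bounded through the diameter $\max_{i,j}\|x_i-x_j\|_2$ — this is exactly why the constant appears as $\frac{\epsilon}{16\max_{i,j}\{\|x_i-x_j\|_2/(2\sigma)\}}$, with the linear (rather than squared) distance in the denominator, in contrast to Theorem~\ref{thm:AijCS}. Care must be taken that the quadratic term $\eta^2$ is genuinely lower order; this is ensured because $\eta$ is itself driven below $\epsilon/(16C')$, so that term only affects the constant and not the stated form of the bound.
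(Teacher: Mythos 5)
Your proposal follows essentially the same route as the paper: convert the Frobenius recovery bound of Theorem~\ref{thm:Xhat} into a per-row bound $\|x_i-\hat{x}_i\|_2\le\eta$, use the triangle inequality and squaring to control the exponents through the diameter constant $\max_{i,j}\{\|x_i-x_j\|_2/(2\sigma)\}$, and then rerun the normalization argument of Theorem~\ref{thm:AijCS} to conclude $|\hat{A}_{i,j}-A_{i,j}|\le e^{2\beta}-1\le\epsilon$. The one spot where you are looser than the paper is the quadratic term $4\eta^2/(2\sigma)$, which you absorb informally even though the linear term already saturates the $\epsilon/4$ budget; in the paper this term enters as a common multiplicative factor $e^{-4\gamma^2/(2\sigma)}$ in both $\hat{W}_{i,j}$ and $\hat{D}_{i,i}$ and cancels exactly in $\hat{D}^{-1/2}\hat{W}\hat{D}^{-1/2}$, so no absorption is needed.
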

\begin{proof}

%We want \begin{equation} 
% \epsilon >  e^{-\| \Phi x_i-\Phi x_j \|_2^2} - e^{- \| x_i-x_j\|_2^2}  > - \epsilon. \label{lem:want2}
%\end{equation}
From the performance guarantees for matrix completion problem with noise proved in Theorem 7 of \cite{MCWN} we have,
\begin{equation} 
\|X- \hat{X}\|_F \leq 4\sqrt{\frac{(2+p)\min(N,n)}{p}}\delta +2\delta \stackrel{def}{=} \gamma.
\label{eq:mcXFbound}
\end{equation}
\begin{equation*} 
\|X- \hat{X}\|_F^2 =\sum_{i=1}^N \|x_i-\hat{x}_i\|_2^2\leq  \gamma^2
\end{equation*}
implies \[\|x_i-\hat{x}_i\|_2\leq  \gamma.\]
Applying the triangle inequality gives,
\[\|\hat{x}_i-\hat{x}_j\|_2 \leq \|\hat{x}_i-x_j\|_2+\|x_i-x_j\|_2+\|x_i-\hat{x}_j\|_2 \leq \|x_i-x_j\|_2 + 2 \gamma.\] 
Similarly,
\[ \|x_i-x_j\|_2 \leq \|\hat{x}_i-\hat{x}_j\|_2 + 2 \gamma.\] 
Combining these gives,
\begin{equation}
 \|x_i-x_j\|_2 - 2 \gamma \leq \|\hat{x}_i-\hat{x}_j\|_2 \leq \|x_i-x_j\|_2 + 2 \gamma.
\label{eq:mcxx2bound}
\end{equation}
Squaring gives,
\[ \|x_i-x_j\|_2^2 - 2 \gamma\|x_i-x_j\|_2  +  4 \gamma^2 \leq \|\hat{x}_i-\hat{x}_j\|_2 \leq \|x_i-x_j\|_2^2 + 2 \gamma\|x_i-x_j\|_2  +  4 \gamma^2.\]
Multiplying by $\frac{-1}{2\sigma}$ and exponentiating gives,
\begin{equation*} e^{\frac{4\gamma \|x_i-x_j\|_2-4\gamma^2}{2\sigma}} e^{\frac{- \|x_i-x_j\|_2^2}{2\sigma}}\geq e^{\frac{-\| \hat{x}_i-\hat{x}_j \|_2^2}{2\sigma}} \geq e^{\frac{-4\gamma \|x_i-x_j\|_2-4\gamma^2}{2\sigma}} e^{ \frac{-\|x_i-x_j\|_2^2}{2\sigma}},\end{equation*}
which can be rewritten as,
\begin{equation} W_{i,j} e^{\frac{4\gamma \|x_i-x_j\|_2-4\gamma^2}{2\sigma}}\geq \hat{W}_{i,j} \geq W_{i,j}e^{\frac{-4\gamma \|x_i-x_j\|_2-4\gamma^2}{2\sigma}}.\label{eq:Whateq1}\end{equation}
Hence,
\begin{equation*} \sum_{j} W_{i,j} e^{\frac{4\gamma \|x_i-x_j\|_2-4\gamma^2}{2\sigma}}\geq \sum_{j} \hat{W}_{i,j} \geq  \sum_{j} W_{i,j}e^{\frac{-4\gamma \|x_i-x_j\|_2-4\gamma^2}{2\sigma}}.\end{equation*}
Letting $C=4\max_{i,j}\left\{\frac{\|x_i-x_j\|_2}{2 \sigma}\right\}$,
\begin{equation*} D_{i,i} e^{\gamma C -\frac{4\gamma^2}{2\sigma}}\geq \hat{D}_{i,i} \geq D_{i,i}e^{-\gamma C - \frac{4\gamma^2}{2\sigma}},\end{equation*}
and taking the square root gives,
\begin{equation}
\sqrt{D_{i,i}} e^{\frac{\gamma C}{2}-\frac{4\gamma^2}{4\sigma}}\geq \sqrt{\hat{D}_{i,i}} \geq \sqrt{D_{i,i}}e^{\frac{-\gamma C}{2} -\frac{4\gamma^2}{4\sigma}}
.\label{eq:Dhateq2}\end{equation}
Dividing \eqref{eq:Whateq1} by $\sqrt{\hat{D}_{i,i}}\sqrt{\hat{D}_{j,j}}$,
\begin{equation*} \frac{W_{i,j}}{\sqrt{\hat{D}_{i,i}}\sqrt{\hat{D}_{j,j}}} e^{\gamma C-\frac{4\gamma^2}{2\sigma}} \geq \hat{A}_{i,j} \geq \frac{W_{i,j}}{\sqrt{\hat{D}_{i,i}}\sqrt{\hat{D}_{j,j}}}e^{-\gamma C-\frac{4\gamma^2}{2\sigma}} ,\end{equation*}
and by using inequality \eqref{eq:Dhateq2} gives,
\begin{equation*} \frac{W_{i,j}}{\sqrt{D_{i,i}}\sqrt{D_{j,j}}e^{-\gamma C}} e^{\gamma C} \geq \tilde{A}_{i,j} \geq \frac{W_{i,j}}{\sqrt{D_{i,i}}\sqrt{D_{j,j}}e^{\gamma C}}e^{-\gamma C}.\end{equation*}
Subtracting by $A_{i,j}$ we have,
\begin{equation*} A_{i,j}(e^{2 \gamma C} -1)  \geq \hat{A}_{i,j} -  A_{i,j}\geq A_{i,j}(e^{-2 \gamma C}-1)\end{equation*}
Since $e^{2 \gamma C}-1 \geq 1- e^{-2 \gamma C}$ for all $\gamma>0$, and $A_{i,j}=\frac{W_{i,j}}{D_{i,j}^{-1/2}  D_{i,j}^{-1/2}}\leq 1$
\begin{equation}|\hat{A}_{i,j} -  A_{i,j}|\leq e^{2\gamma C}-1.\label{eq:Aeq2}\end{equation}

To bound \eqref{eq:Aeq2} by $\epsilon$ requires $\gamma$ to satisfy, 
\begin{equation*} \gamma \leq \frac{\log ( \epsilon +1)}{2C}.  \end{equation*}

By simple calculation it can be shown that $ \frac{\log ( \epsilon +1)}{2C} \geq \frac{1}{4C}\epsilon  $ for $0<\epsilon<1 $.  
Thus, we need $\gamma = \frac{\epsilon}{4 C}$,
so that
\begin{equation*} | \tilde{A}_{i,j} -  A_{i,j} | \leq \epsilon,\end{equation*}
 holds for all $i,j$.  
  \end{proof}

\begin{theorem}
Let $A$ and $\tilde{A}$ be defined as in Theorem~\ref{thm:AijMC} with $v(i)$ and $\tilde{v}(i)$ defined in Corollary~\ref{cor:Vk(i)}. Given there is a $\alpha>0$ such that $\lambda_k -\lambda_{k+1}\geq \alpha$ and $\lambda_k\geq \alpha$, if 
\[4\sqrt{\frac{(2+p)\min(N,n)}{p}}\delta +2\delta \leq \frac{\epsilon}{16 \max_{i,j} \left\{ \frac{\|x_i-x_j\|_2}{2\sigma} \right\}}\] then with high probability
\[\|\tilde{v}(i)-v(i) Q\|_2 \leq (1+\sqrt{2})\frac{N  \epsilon}{\alpha}.\]
\label{thm:VkMC}
\end{theorem}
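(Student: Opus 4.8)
The plan is to mirror the proof of Theorem~\ref{thm:VkCS} almost verbatim, replacing the compressed-sensing entrywise estimate with its matrix-completion counterpart from Theorem~\ref{thm:AijMC}. The entire argument is a chaining of four already-established facts: the uniform entrywise affinity bound, the passage to the Frobenius norm, the subspace perturbation bound, and the transfer to individual rows. Because each link is a previously proved statement, no new analytic estimate is required; the substantive work was already carried out inside Theorem~\ref{thm:AijMC}.

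First I would invoke Theorem~\ref{thm:AijMC}. The hypothesis that $\gamma := 4\sqrt{(2+p)\min(N,n)/p}\,\delta + 2\delta$ is at most $\epsilon/(16\max_{i,j}\{\|x_i-x_j\|_2/(2\sigma)\})$ is precisely what that theorem needs in order to conclude $|\hat{A}_{i,j} - A_{i,j}| \leq \epsilon$ for every pair $(i,j)$, with high probability. The probabilistic qualifier is inherited from the matrix completion recovery guarantee of Theorem~\ref{thm:Xhat}: I would state explicitly that the high-probability event on which the recovery bound \eqref{eq:mcXFbound} holds is the same event on which all of the subsequent deterministic estimates hold, so that the final conclusion carries the correct probabilistic wording.

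Next I would convert the uniform entrywise bound into a Frobenius-norm bound exactly as in Corollary~\ref{cor:AnormCS}: summing $(A_{i,j}-\hat{A}_{i,j})^2 \leq \epsilon^2$ over the $N^2$ entries gives $\|A - \hat{A}\|_F \leq \sqrt{N^2\epsilon^2} = N\epsilon$. With the eigengap hypotheses $\lambda_k - \lambda_{k+1} \geq \alpha$ and $\lambda_k \geq \alpha$ in force, Theorem~\ref{thm:VkQ} then supplies a unitary $Q$ with $\|\hat{V}_k - V_k Q\|_2 \leq (1+\sqrt{2})\frac{1}{\alpha}\|A-\hat{A}\|_F \leq (1+\sqrt{2})\frac{N\epsilon}{\alpha}$. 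Finally, since the Euclidean norm of any single row of $\hat{V}_k - V_k Q$ is dominated by the spectral norm of the whole matrix, Corollary~\ref{cor:Vk(i)} yields $\|\tilde{v}(i) - v(i)Q\|_2 \leq \|\hat{V}_k - V_k Q\|_2 \leq (1+\sqrt{2})\frac{N\epsilon}{\alpha}$, which is the asserted bound.

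I do not expect a genuine obstacle, since every step is a citation of an earlier result rather than a fresh computation, and the only nontrivial estimate has already been absorbed into Theorem~\ref{thm:AijMC}. The one point that demands care is the bookkeeping of the probabilistic statement: the conclusion holds only on the high-probability event furnished by Theorem~\ref{thm:Xhat}, so I would attach the \emph{with high probability} qualifier at the first use of Theorem~\ref{thm:AijMC} and then note that all remaining steps are deterministic consequences valid on that same event. A secondary, purely cosmetic issue is the mismatch between the hat notation $\hat{A},\hat{V}_k$ used in Theorem~\ref{thm:AijMC} and the tilde notation $\tilde{v}(i)$ appearing in the statement; I would unify these (identifying $\hat{A}$ with $\tilde{A}$ throughout) so that the chain of inequalities reads consistently.
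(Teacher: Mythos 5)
Your proposal is correct and matches the paper's own argument, which simply states that the proof follows that of Theorem~\ref{thm:VkCS} by combining Theorem~\ref{thm:AijMC} with Corollary~\ref{cor:Vk(i)}; your expanded chain (entrywise bound $\to$ Frobenius bound $N\epsilon$ $\to$ Theorem~\ref{thm:VkQ} $\to$ Corollary~\ref{cor:Vk(i)}) is exactly the intended route. Your added care about tracking the high-probability event and unifying the hat/tilde notation is a reasonable tightening of the paper's terse presentation, not a departure from it.
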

\begin{proof}
The proof is similar to that of Theorem~\ref{thm:VkCS} by combining Theorem~\ref{thm:AijMC} and Corollary~\ref{cor:Vk(i)}. 
\end{proof}

\section{Numerical Results}\label{sec:res}
The experiments in this section were preformed on three grayscale image sets, a synthetic set, a set of face images and a set of handwritten digits. The spectral clustering method requires local distances between data points.   Each data point is an image and the distances between images, $d(x_i, x_j)$, are as defined above using Frobenius norm distance \cite{NLA} and compressed sensing measurements.

	\begin{figure}[h!]
\begin{center}
\includegraphics[width=\textwidth]{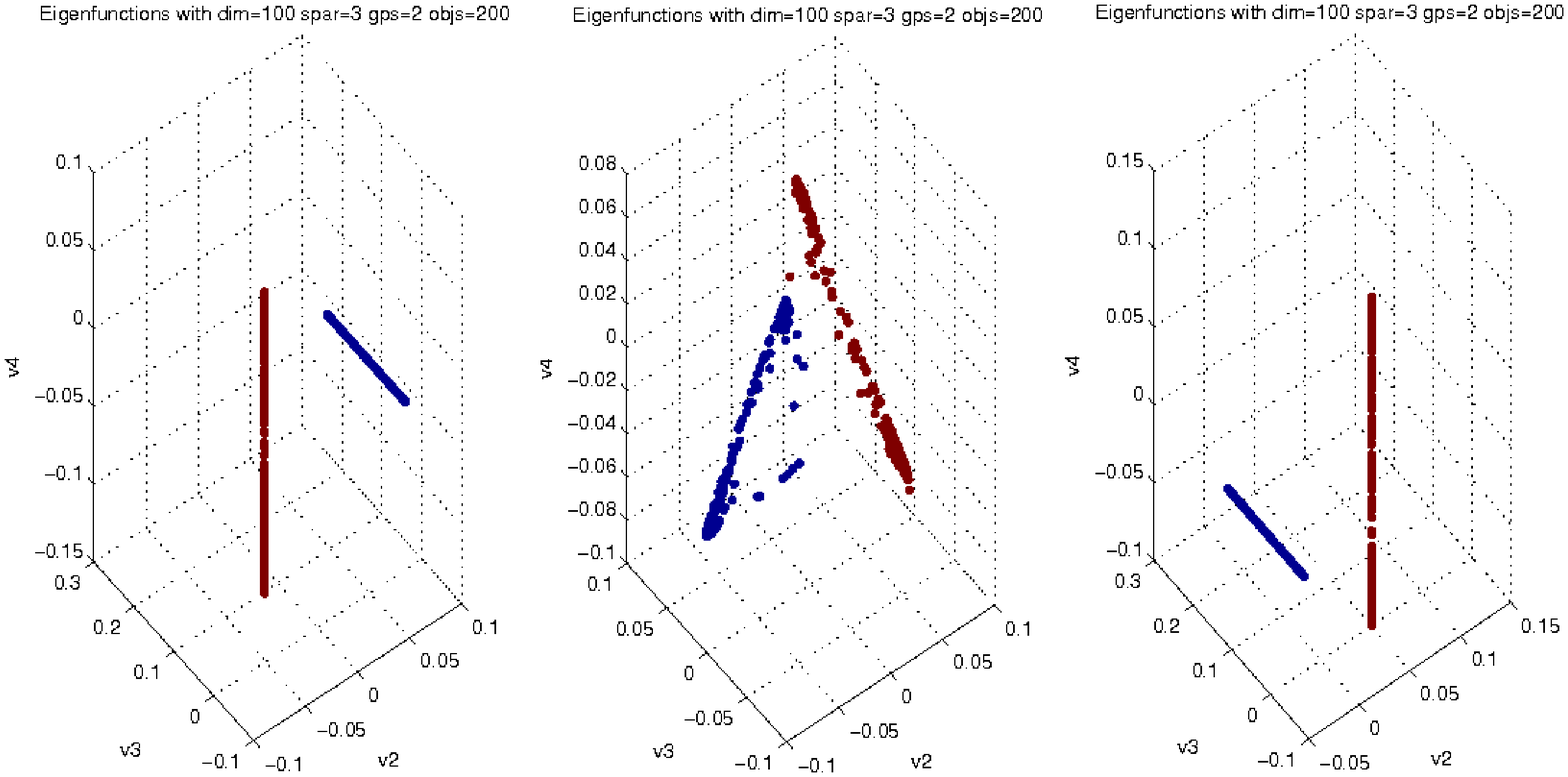}  
\caption {\footnotesize   Data set of synthetic 100 dimensional 3-sparse image vector with two underlying clusters, each image is projected onto the first three nontrivial eigenvectors.   \textbf{Left:} Using the full $100$ dimensional image vector (standard spectral clustering) \textbf{Center:} Using 3 compressed sensing measurements of each image.  \textbf{Right:} Using 30 compressed sensing measurements. }\label{fig:synt100eig}
\end{center}
\end{figure}
	\subsection{Synthetic - Advertisers clustered by keywords}
	This first experiment is clustering advertisers and keywords for a search engine query.  Each advertiser $ x_i $ only pays at most $r$ keywords that they want to be connected to their product, making $x_i \in \mathbb{R}^n$, $s$-sparse in $n$ keywords feature space.  The goal is to groups the advertisers such that advertisers within the same group have pay for similar keywords to identify a wider range of targeted keywords to advertisers.  Spectral clustering provides a balance of cluster compactness, conductance and proportional size.  
Here synthetic feature vectors are created from an unknown unitary sparse transform with a varying sparsity level.  Figure~\ref{fig:synt100eig} shows the clustering of a data set with two hidden data clouds.  The left Figure shows clustering using only 10 random measurements and the middle using 30, compared to standard techniques using the full image on the right.  Both methods provide a linear separable dimension reduction but our compressive method is only using three measurements compared to requiring the full dimension.  The number of measurements used here is taken much lower than our theoretical requirement yet it performs at the same perfect classification rate.
	
	\begin{figure}[h!]
	\begin{center}
		\includegraphics[width=\textwidth]{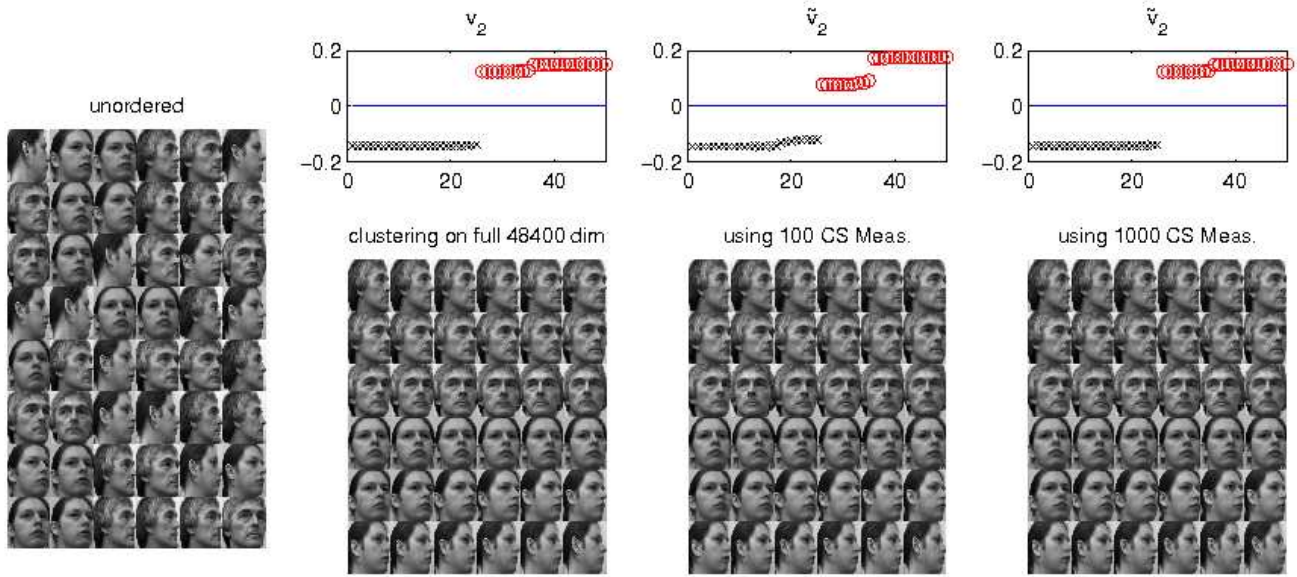} \\
		 \caption {\footnotesize 50 - $220 \times 220$ images of two different faces in a range of poses from profile to frontal views.   \textbf{Far Left:} Unordered. \textbf{Left Center:} Ordered by the second eigenvector from standard spectral clustering. \textbf{Right Center:} By compressive spectral clustering using 100 measurements.  \textbf{Far Right:} Using 1000 measurements.}\label{fig:face_or_eig}
	\end{center}
\end{figure}
	
	\subsection{Face Database}
	This experiment uses a database of 36 112x92 pixel images of the same person, rotating his head, from the UMIST Face Database \cite{FRFUV98}.    It is well known that images have a sparse Fourier and wavelet transform, a fact used in JPEG and JPEG2000 image compression, but the optimal sparsifying transform here is unknown.  The ideal transform would only capture the desired differences in a given set. Here it would transform the image vector into a signal of sparsity level proportional to the degrees of freedom of rotation and/or the number of subjects used.  In Figure~\ref{fig:face_or_eig} the ambient dimension is 10,000 and only 10 measurements are used.  The second eigenvector of $A$ and $\tilde{A}$ both capture the underlying rotation from their shuffled order but our method uses $1/1000$ the number of measurements.  %That is a 1:1000 dimensionality reduction before local distances are computed.  %This makes the local distances 1000 times faster compute yet provides the same end results.

	  Often a signal may not be obtainable, but random measurements of the signal might be.  For example, images where only the inner products with random rows of the Fourier transform are observed.   We show that with $m$ random measurements, a set of images can be clustered by their measurements in the same way as if the clustering was performed on the full images themselves.  
	  
	   \begin{figure}[!h]
	\begin{center}
	%$\begin{array}{c@{\hspace{.1in}}c}
		%\includegraphics[scale=.5]{class_of_3dig_CompVsFull.eps}&
		\includegraphics[width=.5\textwidth]{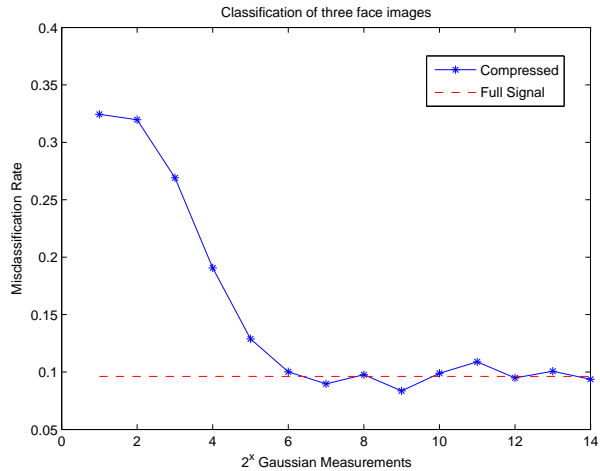}
		%\end{array}$
	\end{center}
	\caption {\footnotesize A dataset of 100 images of three different people's faces in a range of poses from profile to frontal views.  The images are classified by applying k-means to the compressive spectral clustering coordinates ($2^{nd}$ and $3^{rd}$ eigenvectors), using a range of measurements.  The average misclassification rates of 100 trials are plotted against the average misclassification rate resulting from standard spectral clustering applied to the full images.  } \label{fig:misclass3facesCS}
\end{figure}

	  In comparing our compressed results with standard spectral clustering we achieve the same classification rate with fewer measurements than full dimension of the images.  A misclassification rate of .1 is achieved using only $2^8$ measurements where as the full $2^{13}$ dimensional signal is required in standard spectral clustering, as seen in Figure~\ref{fig:misclass3facesCS}.  Here the clustering was performed on a set of images of three different people.  There is more than one way to cluster a set of more than two faces.   In addition to grouping person A, B and C, the images could be grouped by skin tones, hair color or male/female, all of which present a valid classification.  Our method is only guaranteed to perform as well as standard spectral clustering.  Because spectral clustering is unsupervised, given no labeled examples as inputs, the natural groupings found by spectral clustering may fail to match a desired labeling causing a higher misclassification rate.

	\subsection{Handwritten Digits}
	The sparsifying transform here is again unknown.  The ideal transform would transform the image vector into a signal of sparsity level proportional to the number of digits $\{0,1,...,9\}$ to be classified.  Figures \ref{fig:dig_1_3_eig_sort} and \ref{fig:dig137eig} show the clustering of pairs on handwritten digits, using standard spectral clustering on the left and compressive spectral clustering on the right.  The structure for the column space of $\{ v_1,v_2,v_3,v_4\}$ is maintained under the perturbation from making compressed sensing measurements.  The clusters defined by the first eigenvectors of $A$ from using the true Euclidean distances is equal to the clusters defined by the first eigenvectors of $\tilde{A}$ from taking compressed measurements.

 \begin{figure}[!h]
	\begin{center}
		\includegraphics[width=\textwidth]{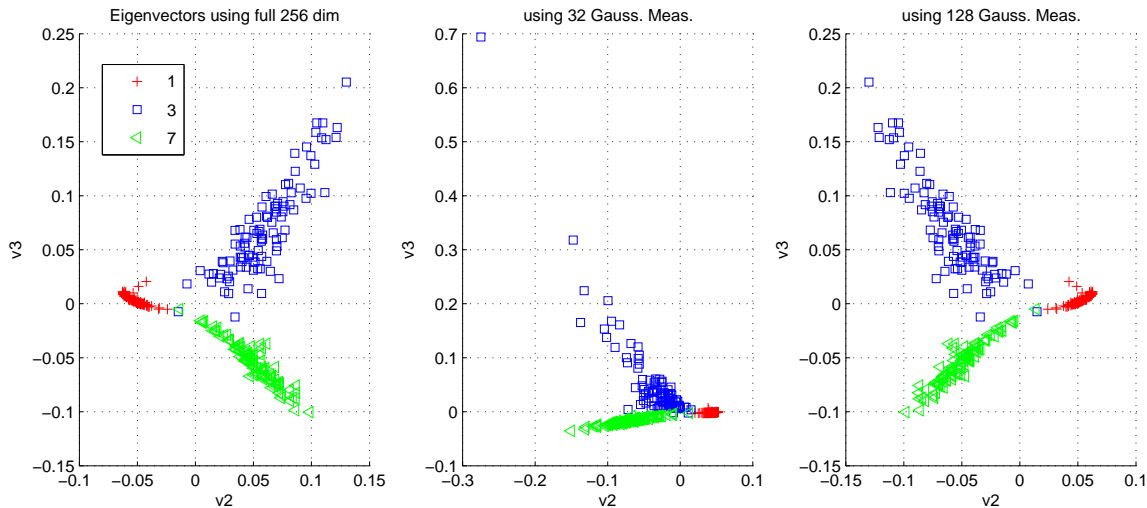}
	\end{center}
	\caption {\footnotesize   The handwritten digits $\{1,3,7\}$ data set is projected onto the 2nd and 3rd eigenvectors of the graph formed \textbf{Left:} using Euclidean distances between points, \textbf{Middle:} using distances from 32 Gaussian measurements, \textbf{Right:} using distances from 128 random Gaussian measurements. } \label{fig:dig137eig}
\end{figure}
	  
 \subsection{Synthetic Images with missing entries}
 Here we use a data set of 1000 synthetic images of three different classes where we can control the rank of the data matrix.  In both experiments only 10\% of the entries are observed.  The first experiment varies the approximate rank of the data where the images are stacked as row vectors.  As the rank of the matrix is artificially increased the number of underlying clusters is blurred.  Figure~\ref{fig:misclass3facesMCandCS}  that as the approximate rank increases so does the misclassification rate.  The second experiment uses a combination of the two perturbation errors, from matrix completion and compressed measurements.  Figure~\ref{fig:misclass3facesMCandCS} shows that as the number of measurements increase the span of the compressed spectral coordinates with matrix completion is close to the traditional spectral coordinates.

\begin{figure}[!h]
	\begin{center}
		\includegraphics[width=0.5\textwidth]{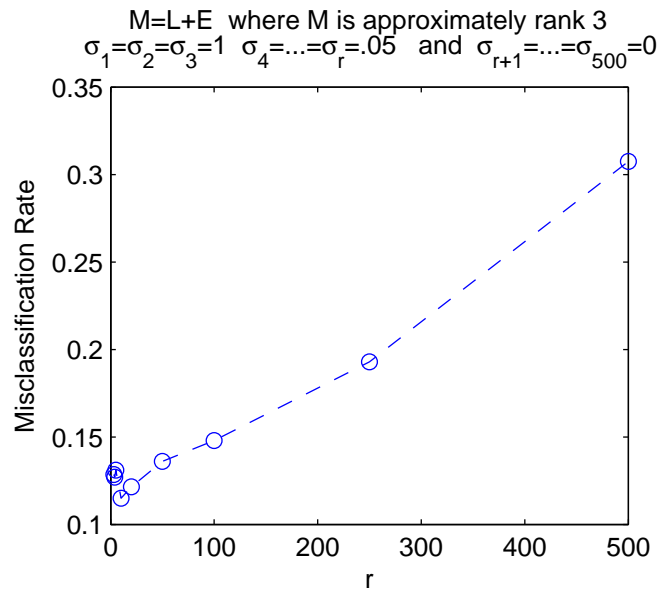}
	\end{center}
	\caption {\footnotesize $M$ is a rank 3 matrix with 1000 images (three clusters) of dimension 500 stacked as rows with only 10\% of the entries observed.  The images are classified by applying the matrix completion then using the first three spectral clustering coordinates ($1^{st}$, $2^{nd}$ and $3^{rd}$ eigenvectors). The number of clusters is pertubed by increasing the rank of $M$. } \label{fig:misclass3facesMC}
\end{figure}

\begin{figure}[!h]
	\begin{center}
		\includegraphics[width=0.5\textwidth]{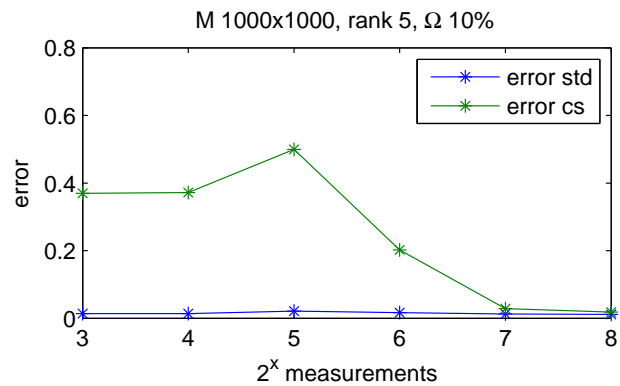}
	\end{center}
	\caption {\footnotesize 1000, 100-sparse synthetic images of dimension 1000 with only 10\% of the entries observed.  The error$=\|\tilde{V}_3 - V_3 Q \|_2$ between the span of the first three eigenvectors of $A$ and $\tilde{A}$ is plotted versus the number of compressed measurements taken.  The images are classified by applying the matrix completion and using compressed spectral clustering coordinates ($1^{st}$, $2^{nd}$ and $3^{rd}$ eigenvectors).  } \label{fig:misclass3facesMCandCS}
\end{figure}

%\section{Conclusion}
%The conclusion goes here.

% if have a single appendix:
%\appendix[Proof of the Zonklar Equations]
% or
%\appendix  % for no appendix heading
% do not use \section anymore after \appendix, only \section*
% is possibly needed

% use appendices with more than one appendix
% then use \section to start each appendix
% you must declare a \section before using any
% \subsection or using \label (\appendices by itself
% starts a section numbered zero.)
%

%
%\appendices
%\section{Proof of the First Zonklar Equation}
%Appendix one text goes here.

% you can choose not to have a title for an appendix
% if you want by leaving the argument blank
%\section{}
%Appendix two text goes here.
%
%
%% use section* for acknowledgement
%\ifCLASSOPTIONcompsoc
%  % The Computer Society usually uses the plural form
%  \section*{Acknowledgments}
%\else
%  % regular IEEE prefers the singular form
%  \section*{Acknowledgment}
%\fi
%
%
%The authors would like to thank...

% Can use something like this to put references on a page
% by themselves when using endfloat and the captionsoff option.
\ifCLASSOPTIONcaptionsoff
  \newpage
\fi

% trigger a \newpage just before the given reference
% number - used to balance the columns on the last page
% adjust value as needed - may need to be readjusted if
% the document is modified later
%\IEEEtriggeratref{8}
% The "triggered" command can be changed if desired:
%\IEEEtriggercmd{\enlargethispage{-5in}}

% references section

% can use a bibliography generated by BibTeX as a .bbl file
% BibTeX documentation can be easily obtained at:
% http://www.ctan.org/tex-archive/biblio/bibtex/contrib/doc/
% The IEEEtran BibTeX style support page is at:
% http://www.michaelshell.org/tex/ieeetran/bibtex/
\bibliographystyle{IEEEtran}
% argument is your BibTeX string definitions and bibliography database(s)
%\bibliography{IEEEabrv,../bib/paper}
%
% <OR> manually copy in the resultant .bbl file
% set second argument of \begin to the number of references
% (used to reserve space for the reference number labels box)

\bibliography{HunterStrohmerSpectralClusteringArXix}

% Generated by IEEEtran.bst, version: 1.13 (2008/09/30)
\begin{thebibliography}{10}
\providecommand{\url}[1]{#1}
\csname url@samestyle\endcsname
\providecommand{\newblock}{\relax}
\providecommand{\bibinfo}[2]{#2}
\providecommand{\BIBentrySTDinterwordspacing}{\spaceskip=0pt\relax}
\providecommand{\BIBentryALTinterwordstretchfactor}{4}
\providecommand{\BIBentryALTinterwordspacing}{\spaceskip=\fontdimen2\font plus
\BIBentryALTinterwordstretchfactor\fontdimen3\font minus
  \fontdimen4\font\relax}
\providecommand{\BIBforeignlanguage}[2]{{%
\expandafter\ifx\csname l@#1\endcsname\relax
\typeout{** WARNING: IEEEtran.bst: No hyphenation pattern has been}%
\typeout{** loaded for the language `#1'. Using the pattern for}%
\typeout{** the default language instead.}%
\else
\language=\csname l@#1\endcsname
\fi
#2}}
\providecommand{\BIBdecl}{\relax}
\BIBdecl

\bibitem{ESL01}
T.~Hastie, R.~Tibshirani, and J.~Friedman, \emph{The Elements of Statistical
  Learning}, ser. Springer Series in Statistics.\hskip 1em plus 0.5em minus
  0.4em\relax New York, NY, USA: Springer New York Inc., 2001.

\bibitem{NCIS}
J.~Shi and J.~Malik, ``Normalized cuts and image segmentation,'' \emph{IEEE
  Transactions on Pattern Analysis and Machine Intelligence}, vol.~22, pp.
  888--905, 1997.

\bibitem{CLSDR}
R.~Calderbank, S.~Jafarpour, and R.~Schapire, ``Compressed learning: Universal
  sparse dimensionality reduction and learning in the measurement domain,''
  \emph{Manuscript}, 2009.

\bibitem{DDCS}
D.~L. Donoho, ``Compressed sensing,'' \emph{IEEE Trans. Inform. Theory}, vol.
  52(4), pp. 1289--1306, 2006.

\bibitem{SSRIIM}
E.~Cand\`es, J.~Romberg, and T.~Tao, ``Stable signal recovery from incomplete
  and inaccurate measurements,'' \emph{Communications on Pure and Applied
  Mathematics}, vol.~59, no.~8, pp. 1207--1223, 2006.

\bibitem{emcco09}
E.~Cand\`es and B.~Recht, ``{Exact matrix completion via convex
  optimization},'' \emph{Foundations of Computational Mathematics}, vol.~9,
  no.~6, pp. 717--772, 2009.

\bibitem{pcrnomc09}
E.~Cand\`es and T.~Tao, ``{The power of convex relaxation: Near-optimal matrix
  completion},'' \emph{Information Theory, IEEE Transactions on}, vol.~56,
  no.~5, pp. 2053--2080, 2010.

\bibitem{KKSC}
I.~Dhillon, Y.~Guan, and B.~Kulis, ``{Kernel k-means: spectral clustering and
  normalized cuts},'' in \emph{Proceedings of the tenth ACM SIGKDD
  international conference on Knowledge discovery and data mining}.\hskip 1em
  plus 0.5em minus 0.4em\relax ACM, 2004, pp. 551--556.

\bibitem{MCSC}
S.~Y. Jianbo, S.~X. Yu, and J.~Shi, ``Multiclass spectral clustering,'' in
  \emph{In International Conference on Computer Vision}, 2003, pp. 313--319.

\bibitem{SR4KM}
H.~Zha, X.~He, C.~Ding, H.~Simon, and M.~Gu, ``Spectral relaxation for k-means
  clustering.''\hskip 1em plus 0.5em minus 0.4em\relax MIT Press, 2001, pp.
  1057--1064.

\bibitem{FASC}
D.~Yan, L.~Huang, and M.~Jordan, ``{Fast approximate spectral clustering},'' in
  \emph{Proceedings of the 15th ACM SIGKDD international conference on
  Knowledge discovery and data mining}.\hskip 1em plus 0.5em minus 0.4em\relax
  ACM, 2009, pp. 907--916.

\bibitem{scwpd}
L.~Huang, D.~Yan, M.~I. Jordan, and N.~Taft, ``Spectral clustering with
  perturbed data,'' in \emph{Advances in Neural Information Processing Systems
  (NIPS)}, 2008.

\bibitem{IMC}
G.~W. Stewart, \emph{Introduction to Matrix Computation}.\hskip 1em plus 0.5em
  minus 0.4em\relax Academic Press, 1973.

\bibitem{REBP70}
C.~Davis and W.~M. Kahan, ``The rotation of eigenvectors by a perturbation.
  {III},'' \emph{SIAM J. Appl. Math.}, vol.~7, no.~1, 1970.

\bibitem{MPTSS}
G.~W. Stewart and J.~Sun, \emph{Matrix Perturbation Theory}.\hskip 1em plus
  0.5em minus 0.4em\relax Academic Press, Boston, 1990.

\bibitem{mcgvl}
G.~H. Golub and C.~F.~V. Loan, \emph{Matrix computations (3rd ed.)}.\hskip 1em
  plus 0.5em minus 0.4em\relax Baltimore, MD, USA: Johns Hopkins University
  Press, 1996.

\bibitem{OSCAA}
A.~Y. Ng, M.~I. Jordan, and Y.~Weiss, ``On spectral clustering: Analysis and an
  algorithm,'' \emph{Advances in Neural Information Processing Systems 14}, pp.
  849--856, 2001.

\bibitem{CTDLP05}
E.~J. Cand\`es and T.~Tao, ``Decoding by linear programming,'' \emph{IEEE
  Trans. Inform. Theory}, vol.~51, pp. 4203--4215, 2005.

\bibitem{RV08}
M.~Rudelson and R.~Vershynin, ``On sparse reconstruction from {F}ourier and
  {G}aussian measurements,'' \emph{Comm. Pure Appl. Math.}, vol.~61, pp.
  1025--1045, 2008.

\bibitem{MCWN}
E.~Cand\'es and Y.~Plan, ``{Matrix completion with noise},'' \emph{Proceedings
  of the IEEE}, vol.~98, no.~6, pp. 925--936, 2010.

\bibitem{NLA}
L.~N. Trefethen and D.~B. III, \emph{Numerical Linear Algebra}.\hskip 1em plus
  0.5em minus 0.4em\relax SIAM, 1997.

\bibitem{FRFUV98}
D.~B. Graham and N.~M. Allinson, ``Face recognition from unfamiliar views:
  Subspace methods and pose dependency,'' in \emph{FG}, 1998, pp. 348--353.

\end{thebibliography}

\begin{IEEEbiographynophoto}{Blake Hunter}
Blake is a Ph.D. candidate in applied mathematics at University of California, Davis, studying data mining via harmonic analysis.  His areas of research interest are applied harmonic analysis, image processing, diffusion maps, machine learning, data mining, and high-dimensional data analysis.
\end{IEEEbiographynophoto}

\begin{IEEEbiographynophoto}{Thomas Strohmer}
Thomas Strohmer received his M.S. and Ph.D. in Mathematics in 1991 and 1994
respectively from the University of Vienna, Austria. He was a Research Assistant at the Department of Mathematics, University of Vienna from 1991 to 1997. He spent one year as Erwin-Schroedinger fellow at the Department of Statistics at the Stanford University and then joined the Department of Mathematics at the University of California in Davis in 1998, where he is now Full Professor. He was a Visiting Professor at Stanford University, the Technical University of Denmark, and the Heinrich-Hertz Institute in Berlin. His general research interests are in harmonic analysis, numerical analysis, digital signal processing, and information theory. He is co-editor of two books and on the editorial board of several journals. He also serves as consultant to the signal processing and telecommunications industry.
\end{IEEEbiographynophoto}

% insert where needed to balance the two columns on the last page with
% biographies
%\newpage

%\begin{IEEEbiographynophoto}{Jane Doe}
%Biography text here.
%\end{IEEEbiographynophoto}

% You can push biographies down or up by placing
% a \vfill before or after them. The appropriate
% use of \vfill depends on what kind of text is
% on the last page and whether or not the columns
% are being equalized.

%\vfill

% Can be used to pull up biographies so that the bottom of the last one
% is flush with the other column.
%\enlargethispage{-5in}

\end{document}